\newtheorem {theorem}{Theorem}
\newtheorem {corollary}{Corollary}
\newtheorem {definition}{Definition}
\newtheorem {lemma}{Lemma}
\newtheorem {proposition}{Proposition}
\newtheorem {remark}{Remark}
\newenvironment {proof}[1][Proof]{\noindent \textbf {#1.} }{\ \rule {0.5em}{0.5em}}
\begin{document}
\title{New Jensen-type inequalities and their applications}
\author{Bar Light\protect\footnote{  Graduate School of Business,
Stanford University, Stanford, CA 94305, USA. e-mail: \textsf{barl@stanford.edu}\ }
~ ~
}
\maketitle

\thispagestyle{empty}

\noindent \noindent \textsc{Abstract}:
\begin{quote}
 Convex analysis is fundamental to proving inequalities that have a wide variety of applications in economics and mathematics. In this paper we provide Jensen-type inequalities for functions that are, intuitively, ``very" convex. These inequalities are simple to apply and can be used to generalize and extend previous results or to derive new results. We apply our inequalities to quantify the notion ``more risk averse" provided in \cite{pratt1978risk}. We also apply our results in other  applications from different fields, including risk measures, Poisson approximation,   moment generating functions, log-likelihood functions, and Hermite-Hadamard type inequalities.
\end{quote}

\noindent {\small Keywords: Convexity, $(p,a,b)$-convex  functions, risk aversion, risk measures, moment generating functions, log-likelihood functions. }  \\\relax
\smallskip \noindent \emph{} 

\newpage 

\section{Introduction}
  
Let $f: \mathbb{R} \rightarrow \mathbb{R}$ be a convex function. Suppose that $a$ minimizes $f$, i.e., $f(x) \geq f(a)$ for all $x \in \mathbb{R}$ and that $f(a)=0$. Then, intuitively, the antiderivative  of the function $f$ on $[a,\infty)$ given by $F(x) : = \int _{a}^{x} f(x) dx$ is ``more" convex than $f$ on $[a,\infty)$. For example, if $f(x)=\vert x \vert $ then $F(x) = x^2/2$ is ``more" convex than $f$ on $[0,\infty)$. Similarly, the antiderivative  of $F$ is intuitively more convex than $F$, and so on. In this paper we provide Jensen-type inequalities for $F$ and its antiderivatives. These inequalities are tighter than the standard inequalities that hold for convex functions. Importantly, these inequalities are simple and can be used in various applications. 

We demonstrate the usefulness of our results in a variety of applications from different fields that are of independent interest. In our first application, we generalize a well-known result by \cite{pratt1978risk} that provides conditions that imply that one expected utility decision maker is more risk averse than another. We provide conditions that quantify the relation `more risk averse'. In our second application, we provide risk measures that are based on utility functions. In our third application we provide a novel Poisson approximation in terms of the Wasserstein distance. In our fourth application, we provide bounds on the moment generating function of a random variable that involve the random variable's first $p$ moments. We also provide a bound on the expected value of a random variable that generalizes the AM-GM inequality. In our fifth application, we provide lower bounds for the log-likelihood function in a standard statistical setting with hidden variables where directly maximizing the log-likelihood function is usually intractable. In the sixth application, we derive novel Hermite-Hadamard type inequalities.

The rest of the paper is organized as follows. Section 2 introduces the $(p,a,b)$-convex functions and provides inequalities that involve these functions.  In Section 3 we use the results from Section 2 for various applications. In Section 4 we provide a summary. 

\section{The family of $(p,a,b)$-convex functions} \label{Sec: convex functions}

Throughout the paper we consider a fixed probability space $\left (\Omega ,\mathcal{F} ,\mathbb{P}\right )$. A random variable $X$ is a measurable real-valued function from $\Omega $ to $\mathbb{R}$. We denote the expectation of a random variable on the probability space $\left (\Omega  ,\mathcal{F} ,\mathbb{P}\right )$ by $\mathbb{E}$. For $1 \leq p \leq \infty $ let $L^{p} : =L^{p}\left (\Omega  ,\mathcal{F} ,\mathbb{P}\right )$ be the space of all random variables $X :\Omega  \rightarrow \mathbb{R}$ such that $\left \Vert X\right \Vert _{p}$ is finite, where $\left \Vert X\right \Vert _{p} =\left (\int _{\Omega }\left \vert X(\omega )\right \vert ^{p} \mathbb{P}(d \omega) \right  )^{1/p}$ for $1 \leq p <\infty $ and $\left \Vert X\right \Vert _{\infty} =\operatorname{ess\,sup} \left \vert X(\omega )\right \vert $. We say $X$ is a random variable on $[a,b]$ for some $a<b$ if $\mathbb{P}(X \in [a,b]) = 1$.

%We denote by $\delta _{c}$ the Dirac measure on the point $c$, i.e., $\delta _{c}(A) =1$ if $c \in A$ and $\delta _{c}\left (A\right ) =0$ if $c \notin A$ for every measurable set $A$. For every random variable $X$ we denote by $C_{X}$ the convex hull of the support of $X$. For the rest of the paper we denote $a =\inf C_{X}$ and $x_{\max } =\sup C_{x}$.

Let $C^{p}([a,b])$ be the set of all $p$ times continuously differentiable functions $f :[a ,b] \rightarrow \mathbb{R}$. For $k\geq 1$, we denote by $f^{(k)}$ the $k$th derivative of a function $f$ and for $k=0$ we define $f^{(0)}:=f$. As usual, the derivatives at the extreme points $f^{(k)}(a)$ and $f^{(k)}(b)$ are defined by taking the left-side and right-side limits, respectively. 
 
 For a non-negative integer and real numbers $a<b$ we define the following set of functions: 
\begin{equation*}\mathfrak{I}(p ,a ,b) : =\{f \in C^{p}([a,b]): \: f^{(p)} \text{ is convex and increasing, } \: f^{(k)}(a) =0 \; \forall k =1 ,\ldots  ,p \}.
\end{equation*}
For an integer $p\geq 1$ we say that a function $f$ is a $(p,a,b)$-convex function if $f \in \mathfrak{I}(p ,a ,b)$. For $p=0$ we say that a function $f$ is a $(0,a,b)$-convex function if $f$ is convex on $[a,b]$. For an integer $p \geq 1$, a function $f$ is a $(p,a,b)$-convex function if the $p$th derivative of $f$ is a convex and increasing function and $f^{(k)}(a)=0$ for all $k=1,\ldots,p$. For every positive integer $p$ the set $\mathfrak{I}(p ,a ,b)$ is a subset of the set of convex and increasing functions. 
The class of functions $\mathfrak{I}(p ,- \infty ,\infty)$ is widely studied in the literature (for an early reference see \cite{williamson1955multiply}) and  plays an important rule in deriving some concentration inequalities (see \cite{pinelis1999fractional}) and stochastic orders (see \cite{fishburn1980stochastic}). The class of $(1,a,b)$-convex functions is used in \cite{light2019family} to study stochastic orders. 

The antiderivative of a $(p-1,a,b)$-convex function $g$ such that $g(a)=0$ is a $(p,a,b)$-convex function. More generally, suppose that $g$ is a $(p-1,a,b)$-convex function. It is easy to see that the function $f(x) := \int _{a}^{x} (g(z)-g(a))dz$ is a $(p,a,b)$-convex function. In particular, the functions that belong to the set $\mathfrak{I}(1 ,a ,b)$ can be identified as the integrals of convex functions. 

Functions that are $(p,a,b)$-convex arise naturally in many settings. The next simple observation shows that we can construct a $(p,a,b)$-convex from the Taylor series of a convex function. We will use this observation in Section 3. 
For a function $f:[0,b] \rightarrow \mathbb{R}$ that belongs to $C^{p}[0,b]$, the remainder of the Taylor series of order $p$ at the point $0$ is a $(p,0,b)$-convex function whenever the function $f^{(p)}$ is convex and increasing.

\begin{lemma} \label{lemma: Taylor}
Let $p \geq 1$ be an integer and let $f \in C^{p}[0,b]$,  $b>0$. If $f^{(p)}$ is convex and increasing on $[0,b]$ then $R_{f,p}$ is a $(p,0,b)$-convex function where $R_{f,p}(x) := f(x)  - \sum _{j=0} ^{p} f^{(j)}(0)x^{j} / j!$ is the  remainder of the Taylor series of order $p$ at the point $0$. 
\end{lemma}

\begin{proof}
Differentiating yields $R_{f,p}^{(k)}(0)= f^{(k)}(0) - f^{(k)}(0) = 0$ for all $k=1,\ldots,p$.

In addition, $R_{f,p}^{(p)}(x)= f^{(p)}(x) - f^{(p)}(0)$ is convex and increasing because $f^{(p)}$ is convex and increasing. We conclude that $R_{f,p}$ is a $(p,0,b)$-convex function.  
\end{proof}

The next theorem provides a version of Jensen's inequality for $(p,a,b)$-convex functions. The proof is deferred to the Appendix. 

\begin{theorem} \label{Thm: Jensen}
 Let $X$ be a random variable on $[a ,b]$ for some $a<b$. Let $p \geq 1$ be an integer.  

(i) For every $(p,a,b)$-convex function $f$ we have
\begin{equation}\mathbb{E}f(X) \geq f\left (a +\left (\mathbb{E}(X -a)^{p+1}\right )^{1/(p+1)}\right ) = f \left (a + \Vert X-a \Vert _{p+1} \right ). \label{Ineq: Jensen}
\end{equation}

(ii) Let $f$ be a $(p,a,b)$-convex function and assume that $f(a)=0$. Let $g(x) := f(x)/(x-a)^{p+1}$. Then $g$ is increasing on $(a,b)$.

\end{theorem}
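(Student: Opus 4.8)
The plan is to establish part (ii) first and then deduce part (i) from it, since the convexity fact hidden inside (i) turns out to be precisely an instance of the monotonicity in (ii) applied to $f'$. Throughout I would work with the shifted variable $x-a$, and the only elementary input I need is the standard observation (call it Lemma~A) that if $\psi$ is convex on $[a,b]$ with $\psi(a)=0$, then the difference quotient $\psi(x)/(x-a)$ is nondecreasing on $(a,b)$. This is immediate from convexity: for $a<x<x'$ one writes $x$ as the convex combination $\tfrac{x'-x}{x'-a}\,a+\tfrac{x-a}{x'-a}\,x'$ and uses $\psi(a)=0$ to get $\psi(x)\le \tfrac{x-a}{x'-a}\psi(x')$, i.e.\ $\psi(x)/(x-a)\le \psi(x')/(x'-a)$.

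For part (ii) I would use the hypothesis $f(a)=0$ together with $f^{(k)}(a)=0$ for $k=1,\ldots,p$, so that all derivatives of $f$ at $a$ up to order $p$ vanish. Taylor's theorem with integral remainder then collapses to $f(x)=\tfrac{(x-a)^p}{(p-1)!}\int_0^1 (1-s)^{p-1} f^{(p)}(a+s(x-a))\,ds$, whence $g(x)=f(x)/(x-a)^{p+1}=\tfrac{1}{(p-1)!}\int_0^1 (1-s)^{p-1}\,\tfrac{f^{(p)}(a+s(x-a))}{x-a}\,ds$. For each fixed $s\in(0,1]$ the integrand equals $(1-s)^{p-1}\,s\cdot\tfrac{f^{(p)}(a+s(x-a))}{(a+s(x-a))-a}$, and since $f^{(p)}$ is convex with $f^{(p)}(a)=0$, Lemma~A shows $\tfrac{f^{(p)}(y)}{y-a}$ is nondecreasing in $y=a+s(x-a)$, hence the integrand is nondecreasing in $x$. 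Therefore $g$ is nondecreasing on $(a,b)$, which is (ii). (Equivalently, one can run an induction on $p$ controlling the numerator $N(x):=(x-a)f'(x)-(p+1)f(x)$ of $(x-a)^{p+2}g'(x)$, using $N(a)=0$ and $N'(x)=(x-a)f''(x)-pf'(x)\ge 0$; but this needs $f\in C^2$, so the integral representation is the cleaner, uniformly valid route.)

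For part (i) the key point is that $F(y):=f\bigl(a+y^{1/(p+1)}\bigr)$ is convex on $[0,(b-a)^{p+1}]$. Indeed, for $y>0$ the chain rule gives $F'(y)=f'(x)/\bigl[(p+1)(x-a)^p\bigr]$ with $x=a+y^{1/(p+1)}$, and since $x$ is increasing in $y$, $F'$ is nondecreasing exactly when $x\mapsto f'(x)/(x-a)^p$ is nondecreasing. But $f'$ is $(p-1,a,b)$-convex with $f'(a)=0$, so the identical integral-representation argument (equivalently, part (ii) applied to $f'$, with the case $p=1$ being Lemma~A applied to the convex function $f'$) yields precisely this monotonicity; note no assumption $f(a)=0$ is required here. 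With $F$ convex I would then simply invoke the ordinary Jensen inequality for the bounded random variable $Y:=(X-a)^{p+1}\ge 0$, namely $\mathbb{E}F(Y)\ge F(\mathbb{E}Y)$. Because $X\ge a$ almost surely, $F(Y)=f\bigl(a+((X-a)^{p+1})^{1/(p+1)}\bigr)=f(X)$ and $F(\mathbb{E}Y)=f\bigl(a+(\mathbb{E}(X-a)^{p+1})^{1/(p+1)}\bigr)$, which is exactly the right-hand side of \eqref{Ineq: Jensen}; the identification with $f(a+\|X-a\|_{p+1})$ is just $\|X-a\|_{p+1}=(\mathbb{E}(X-a)^{p+1})^{1/(p+1)}$, valid since $X-a\ge 0$.

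I expect the monotonicity of $x\mapsto f'(x)/(x-a)^p$, equivalently the convexity of $F$, to be the main obstacle: it is the one genuinely new ingredient and it is what upgrades plain Jensen (which would give only $\mathbb{E}f(X)\ge f(\mathbb{E}X)$) to the sharper bound at $a+\|X-a\|_{p+1}\ge \mathbb{E}X$. Once this monotonicity is in place, both parts reduce to standard tools — the Taylor integral remainder plus Lemma~A for (ii), and a one-line application of classical Jensen for (i). The only remaining care is with degenerate or boundary cases (e.g.\ $X=a$ or $X=b$ almost surely, and the behavior of $F'$ as $y\to 0^+$), all of which are handled by the continuity of $F$ on the closed interval $[0,(b-a)^{p+1}]$.
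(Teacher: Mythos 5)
Your proposal is correct, and it reaches the same two pivots as the paper — the convexity of $y\mapsto f\bigl(a+y^{1/(p+1)}\bigr)$ followed by classical Jensen for $Y=(X-a)^{p+1}$ — but it gets there by a genuinely different route. The paper proves (i) first: it computes the second derivative of $k_p(y)=f(a+y^{1/(p+1)})$, reduces convexity to the pointwise inequality $f^{(2)}(x)(x-a)-pf^{(1)}(x)\ge 0$, and establishes that by a finite induction on the derivatives of $z(x)=f^{(2)}(x)(x-a)-pf^{(1)}(x)$, using $z^{(k)}(a)=0$ and the convexity of $f^{(p)}$ to get $z^{(p-1)}\ge 0$; it then deduces (ii) from the supporting-line inequality for $k_p$ at $0$. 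You reverse the order: you prove (ii) directly from the Taylor integral-remainder representation $f(x)=\tfrac{(x-a)^p}{(p-1)!}\int_0^1(1-s)^{p-1}f^{(p)}(a+s(x-a))\,ds$ together with the elementary monotonicity of difference quotients of the convex function $f^{(p)}$, and then obtain (i) by applying that same monotonicity to $f'$ (which is $(p-1,a,b)$-convex with $f'(a)=0$) to show $F'$ is nondecreasing. Your approach buys two things: it needs only the first-derivative characterization of convexity rather than a second-derivative computation, so it works directly for $f\in C^p$ without the paper's footnoted approximation argument assuming $f^{(p)}$ is differentiable; and it exposes (ii) as the primitive fact from which (i) follows, rather than a corollary of the convexity established for (i). The paper's induction, on the other hand, is self-contained and mirrors the structure it reuses later (e.g., in Theorem \ref{Theorem: risk measures}). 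Your handling of the $p=1$ base case (Lemma~A applied to $f'$) and of the boundary behavior of $F'$ near $y=0$ is adequate; no gaps.
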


\begin{remark}
(i) From a standard truncation argument Theorem \ref{Thm: Jensen} also holds for a random variable $X$ on $[a, \infty)$ such that $X \in L^{p+1}$. 

(ii) Let $f$ be a $(p,a,b)$-convex function. For every integer $p \geq 1$ and every random variable $X \in L^{p+1}$ on $[a, \infty)$, we have $a +\left \Vert X -a\right \Vert _{p+1} \geq a +\mathbb{E}(X -a) =\mathbb{E}(X)$. 

Combining the last inequality with the fact that $f$ is an increasing function (note that a $(p,a,b)$-convex function is increasing) yields 
\begin{equation*}f\left (a +\left \Vert X -a\right \Vert _{p+1}\right ) \geq f\left (\mathbb{E}(X)\right ).
\end{equation*} 
Thus, for all $p \geq 1$, Theorem \ref{Thm: Jensen} is tighter than Jensen's inequality. That is, for functions that belong to the set $\mathfrak{I}(p ,a ,b)$ we have a tighter lower bound on $\mathbb{E}f(X)$ than the standard lower bound of $f\left (\mathbb{E}(X)\right )$. 
\end{remark}

Using Theorem \ref{Thm: Jensen},  we can also  derive upper bounds on $\mathbb{E}f(X)$ for a bounded random variable $X$ and for a function $f \in \mathfrak{I}(p ,a ,b)$ that are tighter than the standard bound derived from Jensen's inequality. These bounds depend on the first $p$ moments of the random variable $X$. The proof is deferred to the Appendix.  

\begin{corollary}\label{Corollary: jensen upper bound}
 Let $X$ be a random variable on $[a ,b]$ for some $a<b$. Let $p \geq 1$ be an integer.  
Then, for every $(p,a,b)$-convex function $f$ we have
\begin{equation*}
   \left(1-\frac{\mathbb{E} (X-a) ^{p+1}}{(b-a)^{p+1}} \right)f(a) + \frac{\mathbb{E} (X-a) ^{p+1}}{(b-a)^{p+1}} f(b) \geq \mathbb{E} f(X)
\end{equation*} 
\end{corollary}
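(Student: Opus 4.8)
The plan is to prove the inequality pointwise on $[a,b]$ and then integrate against the distribution of $X$. First I would reduce to the normalized case $f(a)=0$. Given any $(p,a,b)$-convex function $f$, the shifted function $\tilde f := f - f(a)$ has identical derivatives of every order $k \geq 1$, so $\tilde f^{(p)}$ is still convex and increasing and $\tilde f^{(k)}(a)=0$ for all $k=1,\ldots,p$. Hence $\tilde f$ is again $(p,a,b)$-convex and now satisfies $\tilde f(a)=0$, putting us in the setting where Theorem \ref{Thm: Jensen}(ii) applies.

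Next I would invoke Theorem \ref{Thm: Jensen}(ii) for $\tilde f$, which states that
\[
g(x) := \frac{\tilde f(x)}{(x-a)^{p+1}} = \frac{f(x)-f(a)}{(x-a)^{p+1}}
\]
is increasing on $(a,b)$. Since $f \in C^{p}([a,b])$ is continuous and $(b-a)^{p+1}>0$, the function $g$ extends continuously to the endpoint $b$, so monotonicity yields $g(x) \leq g(b) = (f(b)-f(a))/(b-a)^{p+1}$ for every $x \in (a,b)$. Multiplying through by the nonnegative factor $(x-a)^{p+1}$ and rearranging gives the pointwise secant-type bound
\begin{equation*}
f(x) \leq \left(1 - \frac{(x-a)^{p+1}}{(b-a)^{p+1}}\right) f(a) + \frac{(x-a)^{p+1}}{(b-a)^{p+1}}\, f(b),
\end{equation*}
which holds trivially also at $x=a$ (both sides equal $f(a)$) and at $x=b$, hence on all of $[a,b]$.

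Finally, since $X$ takes values in $[a,b]$ almost surely, I would substitute $x = X$ into this inequality and take expectations; linearity of $\mathbb{E}$ together with the fact that $f(a)$, $f(b)$, and $(b-a)^{p+1}$ are constants delivers exactly the claimed upper bound on $\mathbb{E}f(X)$. I do not anticipate a genuine obstacle: the argument is essentially a direct corollary of the monotonicity statement in Theorem \ref{Thm: Jensen}(ii). The only points that require a little care are the reduction to $f(a)=0$ by shifting, and the continuous extension of $g$ to $b$ that is needed to legitimately compare $g(x)$ with its supremum $g(b)$.
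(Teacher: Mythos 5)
Your proof is correct. It differs from the paper's in which half of Theorem \ref{Thm: Jensen} it leans on: the paper obtains the same pointwise secant bound
\[
f(x) \leq \left(1 - \tfrac{(x-a)^{p+1}}{(b-a)^{p+1}}\right) f(a) + \tfrac{(x-a)^{p+1}}{(b-a)^{p+1}}\, f(b)
\]
by applying part (i) to the two-point random variable supported on $\{a,b\}$ with weights $1-t$ and $t$ and then substituting $t = ((x-a)/(b-a))^{p+1}$, whereas you derive it from part (ii), i.e.\ from the monotonicity of $g(x) = (f(x)-f(a))/(x-a)^{p+1}$, after the (legitimate) normalization $\tilde f = f - f(a)$, which preserves $(p,a,b)$-convexity since the definition constrains only derivatives of order at least one. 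The two derivations are really two faces of the same underlying fact — the convexity of $k_p(y) = f(a+y^{1/(p+1)})$ on $[0,(b-a)^{p+1}]$: the paper uses that $k_p$ lies below its chord over $[0,(b-a)^{p+1}]$, while you use that the slope of $k_p$ from the left endpoint is nondecreasing; for $k_p(0)$ normalized to zero these rearrange to the identical pointwise inequality. Your handling of the endpoints is careful and right (monotonicity on the open interval plus continuity at $b$ gives $g(x)\le g(b)$, and $x=a$, $x=b$ are checked directly), and the final expectation step is the same as the paper's. If anything, your route carries a superfluous dependency: part (ii) of Theorem \ref{Thm: Jensen} is itself proved via the convexity of $k_p$, so invoking part (i) on a Bernoulli distribution, as the paper does, is the marginally more economical path; but nothing is lost in yours.
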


Theorem \ref{Thm: Jensen} and Corollary \ref{Corollary: jensen upper bound} hold for convex and increasing functions. We can prove similar results for convex and decreasing functions.

 For a non-negative integer and real numbers $a<b$ we define the following set of functions: 
\begin{equation*}\mathfrak{D}(p ,a ,b) : =\{f \in C^{p+2}([a,b]): \: (-1)^{k} f^{(k)} \geq 0 \text{ } \forall k =1 ,\ldots  ,p+2,  \text{ }  f^{(k)}(b) =0 \; \forall k =1 ,\ldots  ,p \}.
\end{equation*}
The proof of the following Proposition is similar to the proof of Theorem \ref{Thm: Jensen} and is therefore omitted. 
\begin{proposition} \label{Prop: JEnsen}
Let $X$ be a random variable on $[a,b]$ for some $a<b$. Suppose that $f \in \mathfrak{D}(p ,a ,b)$ for some integer $p \geq 1$. Then 
\begin{equation}\mathbb{E}f(X) \geq  f \left (b - \Vert b - X \Vert _{p+1} \right ). \label{Ineq: JensenDECREASING}
\end{equation}
\end{proposition}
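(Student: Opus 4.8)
The plan is to reduce Proposition~\ref{Prop: JEnsen} to Theorem~\ref{Thm: Jensen}(i) by reflecting the interval about its midpoint, so that a ``convex and decreasing'' situation anchored at the right endpoint $b$ is turned into the ``convex and increasing'' situation anchored at a left endpoint that Theorem~\ref{Thm: Jensen} already handles. Concretely, I would set $\tilde{f}(y) := f(b-y)$ for $y \in [0, b-a]$ and introduce the reflected random variable $Y := b - X$, which is supported on $[0, b-a]$ because $X$ is supported on $[a,b]$; note $0 < b-a$ since $a < b$, so the reflected interval is non-degenerate and matches the hypothesis of Theorem~\ref{Thm: Jensen}.

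The heart of the argument is to check that $\tilde{f}$ is a $(p,0,b-a)$-convex function, i.e.\ that $\tilde{f} \in \mathfrak{I}(p, 0, b-a)$. Differentiating the reflection gives $\tilde{f}^{(k)}(y) = (-1)^{k} f^{(k)}(b-y)$ for every $k$. Since $f \in \mathfrak{D}(p,a,b)$ satisfies $(-1)^{k} f^{(k)} \geq 0$ for all $k = 1, \ldots, p+2$, each derivative $\tilde{f}^{(k)}(y) = (-1)^{k} f^{(k)}(b-y)$ is non-negative on $[0, b-a]$ for those $k$. Taking $k = p+1$ shows that $\tilde{f}^{(p)}$ is increasing, and taking $k = p+2$ shows that $\tilde{f}^{(p)}$ is convex. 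Moreover, the boundary conditions $f^{(k)}(b) = 0$ for $k = 1, \ldots, p$ translate into $\tilde{f}^{(k)}(0) = (-1)^{k} f^{(k)}(b) = 0$ for $k = 1, \ldots, p$. Together with $\tilde{f} \in C^{p+2}([0,b-a]) \subseteq C^{p}([0,b-a])$, these are exactly the defining conditions of $\mathfrak{I}(p,0,b-a)$. Observe that Theorem~\ref{Thm: Jensen}(i) imposes no vanishing condition at the anchor point, so I do not need to control $\tilde{f}(0)$.

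It then remains to feed $\tilde{f}$ and $Y$ into Theorem~\ref{Thm: Jensen}(i) with left endpoint $0$ in place of $a$. Since $Y \geq 0$, that theorem yields $\mathbb{E}\tilde{f}(Y) \geq \tilde{f}\big(0 + \|Y\|_{p+1}\big) = \tilde{f}\big(\|Y\|_{p+1}\big)$. Unwinding the definitions gives $\mathbb{E}\tilde{f}(Y) = \mathbb{E} f(b - Y) = \mathbb{E} f(X)$, while $\|Y\|_{p+1} = \|b - X\|_{p+1}$ and $\tilde{f}\big(\|Y\|_{p+1}\big) = f\big(b - \|b - X\|_{p+1}\big)$, which delivers precisely inequality~\eqref{Ineq: JensenDECREASING}.

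I expect the only real obstacle to be the bookkeeping in the middle step: one must verify that the alternating-sign conditions defining $\mathfrak{D}(p,a,b)$ are calibrated exactly so that reflection flips every sign and produces a genuine element of $\mathfrak{I}(p,0,b-a)$, and in particular that the sign requirements must reach order $p+2$ (rather than merely order $p$) precisely because establishing both convexity and monotonicity of $\tilde{f}^{(p)}$ consumes the two extra derivatives $\tilde{f}^{(p+1)}$ and $\tilde{f}^{(p+2)}$. A reader wary of the reflection could instead mirror the Appendix proof of Theorem~\ref{Thm: Jensen} line by line, replacing $(x-a)$ by $(b-x)$ throughout; the reflection argument simply packages that symmetry cleanly.
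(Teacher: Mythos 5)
Your proof is correct. Note that the paper does not actually supply an argument for this proposition: it states only that ``the proof is similar to the proof of Theorem \ref{Thm: Jensen} and is therefore omitted,'' which implicitly invites the reader to rerun the Appendix argument with $(b-x)$ in place of $(x-a)$ --- i.e.\ to verify directly that $y \mapsto f\bigl(b - y^{1/(p+1)}\bigr)$ is convex by redoing the induction on the auxiliary functions $z^{(k)}$. Your reflection argument is a genuinely cleaner route: by passing to $\tilde{f}(y) = f(b-y)$ and $Y = b - X$ you transport the problem into $\mathfrak{I}(p,0,b-a)$ and invoke Theorem \ref{Thm: Jensen}(i) as a black box, so no computation is repeated. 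The verification that $\tilde{f}^{(k)}(y) = (-1)^k f^{(k)}(b-y)$ turns the alternating-sign conditions of $\mathfrak{D}(p,a,b)$ into nonnegativity, that $k=p+1$ and $k=p+2$ deliver monotonicity and convexity of $\tilde{f}^{(p)}$, and that $f^{(k)}(b)=0$ becomes $\tilde{f}^{(k)}(0)=0$, is exactly right, and you correctly observe that no condition on $\tilde{f}(0)$ is needed for part (i). Your approach also has the incidental advantage that, since $\mathfrak{D}(p,a,b) \subseteq C^{p+2}$, the smoothness needed in the proof of Theorem \ref{Thm: Jensen} is automatic and the approximation footnote there is not even required; and your closing remark explaining why the definition of $\mathfrak{D}$ reaches order $p+2$ is a useful observation that the paper leaves implicit.
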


\section{Applications}

\subsection{Risk aversion} \label{sec: risk aversion}
 
 Consider a setting in which a decision maker (DM) faces a lottery that is represented by some random variable $X$ on $[0,\infty)$.\footnote{Our results can be generalized to any random variable $Y$ on $[a,-\infty)$ that is bounded below by considering the random variable $X:=Y-a$ on $[0,\infty)$.} A realization of $X$, say $x$, represents a loss of $x$ dollars. A loss  function $l:[0,\infty) \rightarrow [0,\infty)$ is a strictly convex and strictly increasing function. For a DM with a loss function $l$ the expected loss from  a risky lottery $X$ is given by $\mathbb{E}l(X)$. The convexity of the loss function  represents the decision maker's risk aversion. Let $f$ and $l$ be two loss functions on $[0,\infty)$. A question of interest is whether a DM with a loss function $l$ is more risk averse than a DM with a loss function $f$. A standard definition \citep{pratt1978risk} states that $l$ exhibits more risk aversion than $f$ if,  for every number $c$ and every lottery $X$, whenever $l$ prefers the lottery $X$ to some sure amount $c$ then $f$ also prefers the lottery $X$ to $c$. In this section we are interested in extending this definition to formalize the following: To what degree does the loss function $l$ exhibit more risk aversion than the loss function $f$? In the spirit of \cite{pratt1978risk} we introduce the following definition: 
 
 \begin{definition}
 Let $l$ and $f$ be two loss functions and let $p \geq 1$ be an integer.  We say that $l$ is $p$-more risk averse than $f$ if $\Vert f(X)  \Vert _{p} \leq f(c)$ whenever $\mathbb{E}l(X) \leq l(c)$ for every number $c \geq 0$ and every random variable $X$ on $[0,\infty)$.
 \end{definition}
 
 Note that for $p=1$ the binary relation $1$-more risk averse reduces to the standard more risk averse binary relation that we mentioned above \citep{pratt1978risk}. For $p \geq 2$, because $\Vert f(X)  \Vert _{p} \geq \Vert f(X)  \Vert _{1} $, we require a stronger condition. This stronger condition provides a natural way to quantify the relation ``more risk averse". Hence, the relation $p$-more risk averse captures the degree to which $l$ is more risk averse than $f$. When $p$ is higher, the degree to which $l$ is more risk averse than $f$ is higher. The next Theorem shows that a simple characterization of $p$-more risk aversion can be provided in terms of the $(p,a,b)$-convex functions. Note that \cite{pratt1978risk} proves Theorem \ref{Theorem pratt} for the case that $p=1$. As usual, for two functions $f$ and $g$ we write $(f \circ g)(x) :=f(g(x)) $.

 \begin{theorem} \label{Theorem pratt}
Let $l$ and $f$ be two loss functions and let $p \geq 1$ be an integer. The function $l \circ f^{-1}$ is $(p-1,0,\infty)$-convex if and only if $l$ is $p$-more risk averse than $f$.  
 \end{theorem}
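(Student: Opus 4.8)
The plan is to reduce the entire statement to a single scalar Jensen-type inequality for the composition $\phi := l\circ f^{-1}$, and then read off the two implications from Theorem~\ref{Thm: Jensen}. First I would record that $\phi$ is continuous and strictly increasing (a composition of strictly increasing maps); normalizing $f(0)=0$ so that $f$ is a bijection of $[0,\infty)$ onto itself, I rewrite the definition after the substitution $Y:=f(X)$ and $d:=f(c)$. As $X$ ranges over all random variables on $[0,\infty)$ so does $Y$, and $\mathbb{E}l(X)=\mathbb{E}\phi(Y)$, $\|f(X)\|_{p}=\|Y\|_{p}$, $l(c)=\phi(d)$. For a fixed $Y$ the admissible sure amounts are exactly the $d$ with $\phi(d)\geq\mathbb{E}\phi(Y)$, whose binding value is $d_{0}=\phi^{-1}(\mathbb{E}\phi(Y))$; hence ``$\mathbb{E}l(X)\leq l(c)\Rightarrow\|f(X)\|_{p}\leq f(c)$'' holds for every $c$ and every $X$ if and only if $\|Y\|_{p}\leq d_{0}$ for every $Y$, that is, if and only if
\[
\phi\!\left(\|Y\|_{p}\right)\leq\mathbb{E}\phi(Y)\qquad\text{for every random variable }Y\text{ on }[0,\infty).
\]
This reformulation is the workhorse: it turns the behavioral definition into a Jensen-type inequality for $\phi$.

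For the ``if'' direction I would simply apply Theorem~\ref{Thm: Jensen}(i) to $\phi$ with index $p-1$ and $a=0$. If $\phi=l\circ f^{-1}$ is $(p-1,0,\infty)$-convex, the theorem (together with the truncation remark for random variables on $[0,\infty)$ with $Y\in L^{p}$) yields $\mathbb{E}\phi(Y)\geq\phi\!\left(\|Y\|_{(p-1)+1}\right)=\phi\!\left(\|Y\|_{p}\right)$, which is exactly the displayed inequality, so by the reformulation $l$ is $p$-more risk averse than $f$. For $p=1$ this is ordinary Jensen's inequality for the convex function $\phi$, recovering Pratt's classical characterization.

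For the ``only if'' direction I would begin from the displayed inequality and substitute $U:=Y^{p}$ together with $\psi(u):=\phi(u^{1/p})$, so that $\mathbb{E}\phi(Y)=\mathbb{E}\psi(U)$ and $\phi(\|Y\|_{p})=\psi(\mathbb{E}U)$. Since $y\mapsto y^{p}$ is a bijection of $[0,\infty)$, the inequality holds for all $Y$ precisely when $\mathbb{E}\psi(U)\geq\psi(\mathbb{E}U)$ holds for all nonnegative $U$; testing against two-point distributions and using continuity, the converse of Jensen's inequality forces $\psi$ to be convex on $[0,\infty)$. It then remains to translate convexity of $\psi$ back into the defining properties of $\mathfrak{I}(p-1,0,\infty)$: writing this convexity in terms of $\phi$ (for smooth $\phi$ it is the pointwise relation $y\phi''(y)\geq(p-1)\phi'(y)$, equivalently monotonicity of $y\mapsto\phi'(y)/y^{p-1}$), I would try to recover the boundary conditions $\phi^{(k)}(0)=0$ for $k=1,\dots,p-1$ together with the convexity and monotonicity of $\phi^{(p-1)}$.

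The hard part is exactly this last translation. Convexity of $\psi$ is a single second-order condition, whereas membership in $\mathfrak{I}(p-1,0,\infty)$ packages several higher-order derivative conditions, so extracting the full $\mathfrak{I}$-structure from the scalar inequality is the delicate implication and must be argued with care rather than by a one-line manipulation. Here Theorem~\ref{Thm: Jensen}(ii) supplies the reverse link—if $\phi\in\mathfrak{I}(p-1,0,\infty)$ with $\phi(0)=0$ then $\phi'(y)/y^{p-1}$ is increasing, matching the monotonicity equivalent to convexity of $\psi$—and it is this correspondence that one must pin down to close the ``only if'' direction; the reduction steps and the ``if'' direction are routine once the reformulation is in hand.
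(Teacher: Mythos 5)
Your reformulation of $p$-more risk aversion as the single inequality $\phi\left(\Vert Y\Vert_{p}\right)\leq\mathbb{E}\phi(Y)$ for $\phi=l\circ f^{-1}$, and your ``if'' direction via Theorem \ref{Thm: Jensen}(i) applied with index $p-1$ and $a=0$, are correct and are essentially what the paper does (the paper writes $(l\circ f^{-1})(\Vert f(X)\Vert_{p})\leq\mathbb{E}(l\circ f^{-1})(f(X))\leq(l\circ f^{-1})(f(c))$ and then uses strict monotonicity of $l$ and $f$ to invert). Your ``only if'' direction also begins the same way as the paper's: testing two-point distributions shows that a failure of the Jensen-type inequality is witnessed by a failure of convexity of $\psi(u)=\phi(u^{1/p})$, which the paper realizes concretely as a two-point lottery $X$ and a certainty equivalent $c$ with $\mathbb{E}l(X)=l(c)$ but $\Vert f(X)\Vert_{p}>f(c)$.

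The genuine gap is the step you yourself flag and then leave open: passing from ``$\psi(u)=\phi(u^{1/p})$ is convex'' back to ``$\phi$ is $(p-1,0,\infty)$-convex.'' You reduce the only-if direction to this implication and stop, saying you would ``try to recover'' the boundary conditions $\phi^{(k)}(0)=0$ and the convexity and monotonicity of $\phi^{(p-1)}$. This is not a routine afterthought: for smooth $\phi$, convexity of $\psi$ is equivalent to the single second-order inequality $y\phi''(y)\geq(p-1)\phi'(y)$, whereas membership in $\mathfrak{I}(p-1,0,\infty)$ additionally requires convexity of $\phi^{(p-1)}$, a third- (or higher-) order condition that the second-order inequality does not deliver for $p\geq 2$. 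So as written your argument establishes only that the $p$-more-risk-averse property forces convexity of $\psi$, not the full $(p-1,0,\infty)$-convexity claimed in the statement. For what it is worth, the paper's own proof closes this step by assertion rather than argument: it writes that if $l\circ f^{-1}$ is not $(p-1,0,\infty)$-convex then $k(z)=(l\circ f^{-1})(z^{1/p})$ is not convex, citing the proof of Theorem \ref{Thm: Jensen} --- but that proof only establishes the implication from $(p-1,0,\infty)$-convexity to convexity of $k$, not its converse. To complete your proof you must either prove that converse for the functions $\phi=l\circ f^{-1}$ arising from loss functions, or observe that the correct characterization extracted from the definition is convexity of $z\mapsto\phi(z^{1/p})$ and reconcile that with the theorem's statement.
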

\begin{proof}
Let $p \geq 1$ be an integer. Suppose that $l \circ f^{-1}$ is $(p-1,0,\infty)$-convex. Let $X$ be a random variable on $[0,\infty)$ and let $c \geq 0$ be such that $\mathbb{E}l(X) \leq l(c)$. Note that $$\mathbb{E}l(X) \leq l(c) \Leftrightarrow \mathbb{E}( l \circ f^{-1} ) (f(X)) \leq (l \circ f^{-1}) (f(c)).$$ 
Theorem \ref{Thm: Jensen} applied for the random variable $Z=f(X)$ on $[0,\infty)$ implies that 
$$( l \circ f^{-1} ) ( \Vert f(X) \Vert _{p} ) \leq \mathbb{E}( l \circ f^{-1})(f(X)). $$ We conclude that $ ( l \circ f^{-1} ) ( \Vert f(X) \Vert _{p} ) \leq  (l \circ f^{-1}) (f(c))$. 
Because $l$ and $f$ are strictly increasing we have $\Vert f(X) \Vert _{p} \leq f(c)$. Thus,  $l$ is $p$-more risk averse than $f$. 

Now assume that $l$ is $p$-more risk averse than $f$.  Assume in contradiction that $l \circ f^{-1} $ is not $(p-1,0,\infty)$-convex. Then the function $k(z) =(l \circ f^{-1})(z^{1/p})$ is not convex  (see the proof of Theorem \ref{Thm: Jensen}). Thus, there exists $z_{1},z_{2} \geq 0$ and $\lambda \in (0,1)$ such that 
$$ ( l \circ f^{-1} ) \left ( \left ( \lambda z_{1} + (1-\lambda)z_{2} \right )^{1/p} \right ) > \lambda (l \circ f^{-1}) (z_{1}^{1/p}) + (1- \lambda) ( l \circ f^{-1} ) (z_{2}^{1/p} ) $$
Let $x_{1},x_{2},c$ be such that $f^{-1} (z_{i}^{1/p}) = x_{i}$ for $i=1,2$ and $l (x_{1}) + (1- \lambda)l (x_{2}) = l(c)$. We have 
\begin{align*}
     ( l \circ f^{-1} ) \left ( \left ( \lambda z_{1} + (1-\lambda)z_{2} \right )^{1/p} \right ) & > \lambda (l \circ f^{-1}) (f(x_{1})) + (1- \lambda) ( l \circ f^{-1} ) (f(x_{2})) \\
     & = \lambda l (x_{1}) + (1- \lambda)l (x_{2}) = l(c).
     \end{align*}
The fact that $l$ is strictly increasing implies 
$$ f^{-1}  \left ( \left ( \lambda z_{1} + (1-\lambda)z_{2} \right )^{1/p} \right ) > c \Leftrightarrow   \left ( \lambda f(x_{1})^{p} + (1-\lambda)f(x_{2})^{p} \right )^{1/p} > f(c). $$
Thus, $\Vert f(X) \Vert _{p} > f(c)$ and $\mathbb{E}l(X) = l(c)$ for the random variable $X$ that yields $x_{1}$ with probability $\lambda$ and $x_{2}$ with probability $1 -\lambda$ which is a contradiction to the statement  $l$ is $p$-more risk averse than $f$.  
 \end{proof}

\subsection{Risk measures}  
 
 Consider the setting of Section \ref{sec: risk aversion}, where a decision maker (DM) faces a possible future loss that is represented by a random variable $X$ on $[0,\infty)$. The DM wants to measure the risk of the random variable $X$. A standard approach to measuring the risk of $X$ is to assume that the DM has some loss function $l$ but the DM does not know the law of $X$. The DM considers some set $Q$ that consists of possible laws for $X$. The risk of the random variable $X$ is measured by $\sup _{q \in Q} \mathbb{E}_{q} l(X)$. That is, the risk of $X$ is given by the worst-case expected loss given that the law of $X$ belongs to $Q$. This approach is fundamental in the theory of risk measures (see  \cite{artzner1999coherent}). A decision-theoretic axiomatization of this approach is provided in \cite{gilboa1989maxmin}.
 
 Alternatively, a different approach to measuring the risk of $X$ assumes that the DM knows the law of $X$ but does not know the loss function $l$. The DM considers some set $\mathcal{L}$ that consists of possible ``very" risk averse  loss functions (recall from Section \ref{sec: risk aversion} that a loss function $l:[0,\infty) \rightarrow [0,\infty)$ is a strictly increasing and strictly convex function). For every loss function $l \in \mathcal{L}$ the DM computes the certainty equivalent of $X$, i.e., $l^{-1} (\mathbb{E}l(X))$, and the risk of the random variable $X$ is measured by $R_{\mathcal{L}}(X):=\inf _{l \in \mathcal{L}} l^{-1} (\mathbb{E}l(X))$. Intuitively, $R_{\mathcal{L}}(X)$ equals the highest number $c$ such that for every loss function in the set $\mathcal{L}$ the expected loss is higher than or equal to $c$. That is, $c$ is the highest number such that every agent in the set of very risk averse agents $\mathcal{L}$ prefers $c$ to the lottery.\footnote{A similar idea and a decision-theoretic axiomatization are provided in \cite{cerreia2015cautious}.} 
 To practically use this approach one needs to characterize the risk measure  $ R_{\mathcal{L}}(X) $ for plausible sets of loss functions. Using Theorem \ref{Thm: Jensen} we characterize $ R_{\mathcal{L}}(X) $ for a set of loss functions that we now introduce.

For every integer $p \geq 1$ we consider the following sets of functions 
$$ \mathcal{L}_{p} := \{ l \in C^{p+2}[0,\infty): l^{(2)}(x)x \geq pl^{(1)}(x) \text { }  \forall x \geq 0, l^{(k)}(x) > 0 \text { } \forall  x > 0 \text {  }   \forall k=1,\ldots,p+2 \}. $$

Suppose that $l \in \mathcal{L}_{1}$. The requirements  $l^{(1)} > 0$ and $l^{(2)} > 0$ ensure that $l$ is a loss function. $l^{(3)} \geq 0$ means that the DM exhibits downside risk aversion which is a natural property in our setting (see \cite{menezes1980increasing}). The condition $l^{(2)}(x)x \geq l^{(1)}(x)$ is a curvature condition that is widely analyzed and used in the literature. Thus, the set $\mathcal{L}_{1}$ is a plausible set of ``very" risk averse loss functions. For a higher $p$, the loss functions in $\mathcal{L}_{p}$ exhibit higher-order risk aversion and a higher lower bound on the Arrow-Pratt measure of relative risk aversion. The next theorem characterizes $R_{\mathcal{L}_{p}}$ for every integer $p \geq 1$. Thus, the loss functions in $\mathcal{L}_{p}$ are more risk averse when $p$ is higher. Using Theorem \ref{Thm: Jensen} we show that $R_{\mathcal{L}_{p}}(X) = \Vert X \Vert _{p+1}$ for every integer $p \geq 1$. 

\begin{theorem} \label{Theorem: risk measures}
Let $p \geq 1$ be an integer. Suppose that $X \in L^{p+1}$. Then $R_{\mathcal{L}_{p}}(X) = \Vert X \Vert _{p+1}$.
\end{theorem}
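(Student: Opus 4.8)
The plan is to prove the two inequalities $R_{\mathcal{L}_{p}}(X)\ge \Vert X\Vert_{p+1}$ and $R_{\mathcal{L}_{p}}(X)\le \Vert X\Vert_{p+1}$ separately. The guiding observation for both directions is that, for $l\in\mathcal{L}_{p}$, the curvature condition $l^{(2)}(x)x\ge p\,l^{(1)}(x)$ is exactly the statement that $l^{(1)}(x)/x^{p}$ is nondecreasing on $(0,\infty)$, and this in turn is exactly the statement that the reparametrized map $z\mapsto l(z^{1/(p+1)})$ is convex on $[0,\infty)$. This is the same change of variables that underlies Theorem \ref{Thm: Jensen}, and it is what permits a Jensen step. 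Throughout, $X\ge 0$ and $X\in L^{p+1}$, so $\Vert X\Vert_{p+1}=(\mathbb{E}X^{p+1})^{1/(p+1)}<\infty$.

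For the lower bound, fix $l\in\mathcal{L}_{p}$; I may assume $\mathbb{E}l(X)<\infty$, since otherwise the certainty equivalent is infinite and the bound is trivial. Set $\psi(z):=l(z^{1/(p+1)})$. A one-line differentiation gives $\psi'(z)=l^{(1)}(x)/((p+1)x^{p})$ with $x=z^{1/(p+1)}$, so $\psi$ is convex precisely because $l^{(1)}(x)/x^{p}$ is nondecreasing. Applying Jensen's inequality to the convex function $\psi$ and the random variable $Z=X^{p+1}$ yields $\mathbb{E}l(X)=\mathbb{E}\,\psi(X^{p+1})\ge \psi(\mathbb{E}X^{p+1})=l(\Vert X\Vert_{p+1})$, and since $l$ is strictly increasing this is the same as $l^{-1}(\mathbb{E}l(X))\ge \Vert X\Vert_{p+1}$. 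Taking the infimum over $l\in\mathcal{L}_{p}$ gives $R_{\mathcal{L}_{p}}(X)\ge\Vert X\Vert_{p+1}$. This step is essentially the content of Theorem \ref{Thm: Jensen}(i) specialized to $a=0$; the only point to note is that the $\mathcal{L}_{p}$ curvature condition supplies the required convexity of $\psi$ directly, without the hypothesis $l^{(k)}(0)=0$ that membership in $\mathfrak{I}(p,0,\infty)$ would demand.

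The upper bound is where the real work lies, and it is the step I expect to be the main obstacle. I must exhibit functions in $\mathcal{L}_{p}$ whose certainty equivalents approach $\Vert X\Vert_{p+1}$. The natural candidate $l(x)=x^{p+1}$ gives the exact value $\Vert X\Vert_{p+1}$ but lies only on the boundary of $\mathcal{L}_{p}$ (its $(p+2)$th derivative vanishes), and the obvious remedy $l(x)=x^{p+1+1/n}$ fails because it need not be integrable against $X$ when $X\in L^{p+1}\setminus\bigcup_{q>p+1}L^{q}$. The difficulty is therefore to approximate $x^{p+1}$ from inside $\mathcal{L}_{p}$ while retaining $O(x^{p+1})$ growth. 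My candidate is the family $l_{\varepsilon}$ determined by $l_{\varepsilon}^{(1)}(x)=x^{p+1}/(\varepsilon+x)$, that is $l_{\varepsilon}(x)=\int_{0}^{x} t^{p+1}/(\varepsilon+t)\,dt$ for $\varepsilon>0$. For this family $l_{\varepsilon}^{(1)}(x)/x^{p}=x/(\varepsilon+x)$ is increasing, so the curvature condition holds. For the positivity of the derivatives I use that $l_{\varepsilon}^{(1)}$ has a zero of order $p+1$ at the origin while $l_{\varepsilon}^{(p+2)}(x)=(p+1)!\,\varepsilon^{p+1}/(\varepsilon+x)^{p+2}>0$; integrating successively from $0$ then shows $l_{\varepsilon}^{(k)}>0$ on $(0,\infty)$ for every $k=1,\dots,p+2$, so $l_{\varepsilon}\in\mathcal{L}_{p}$.

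It remains to pass to the limit. Because $l_{\varepsilon}(x)\le x^{p+1}/(p+1)$ we have $\mathbb{E}l_{\varepsilon}(X)\le \tfrac{1}{p+1}\Vert X\Vert_{p+1}^{p+1}<\infty$, whence $R_{\mathcal{L}_{p}}(X)\le l_{\varepsilon}^{-1}(\mathbb{E}l_{\varepsilon}(X))\le l_{\varepsilon}^{-1}(\tfrac{1}{p+1}\Vert X\Vert_{p+1}^{p+1})$. As $\varepsilon\downarrow 0$ the functions $l_{\varepsilon}$ increase pointwise to $l_{0}(x):=x^{p+1}/(p+1)$, so their inverses decrease pointwise to $l_{0}^{-1}(y)=((p+1)y)^{1/(p+1)}$; evaluating at the fixed point $y=\tfrac{1}{p+1}\Vert X\Vert_{p+1}^{p+1}$ gives $l_{\varepsilon}^{-1}(\tfrac{1}{p+1}\Vert X\Vert_{p+1}^{p+1})\downarrow \Vert X\Vert_{p+1}$. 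Hence $R_{\mathcal{L}_{p}}(X)\le\Vert X\Vert_{p+1}$, which together with the lower bound yields $R_{\mathcal{L}_{p}}(X)=\Vert X\Vert_{p+1}$. The delicate point throughout is the verification that $l_{\varepsilon}\in\mathcal{L}_{p}$, namely the positivity of all derivatives up to order $p+2$; the explicit identity for $l_{\varepsilon}^{(p+2)}$ combined with the order-$(p+1)$ vanishing at the origin is precisely what makes that verification tractable.
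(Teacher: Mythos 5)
Your proof is correct, and on the upper bound it takes a genuinely different — and in fact more careful — route than the paper. For the lower bound the paper first proves, via the auxiliary function $z(x)=l^{(2)}(x)x-pl^{(1)}(x)$ and a contradiction argument, that $l^{(k)}(0)=0$ for $k=1,\dots,p$, concludes that $l$ is $(p,0,\infty)$-convex, and then invokes Theorem \ref{Thm: Jensen}; you instead observe that the curvature condition defining $\mathcal{L}_p$ is literally the convexity of $z\mapsto l(z^{1/(p+1)})$ (which is also the key inequality inside the proof of Theorem \ref{Thm: Jensen}) and apply Jensen directly, bypassing the boundary-derivative lemma. The two arguments are equivalent in substance, but yours is shorter and makes transparent why the constant $p$ in the curvature condition is the right one. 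For the upper bound the paper simply takes $m(x)=x^{p+1}$ and asserts $m\in\mathcal{L}_p$; as you point out, this is not quite right, since $m^{(p+2)}\equiv 0$ violates the strict positivity $l^{(p+2)}(x)>0$ required by the definition of $\mathcal{L}_p$ (the paper only verifies positivity of $m^{(k)}$ for $k\le p$). Your family $l_{\varepsilon}(x)=\int_0^x t^{p+1}/(\varepsilon+t)\,dt$ genuinely lies in $\mathcal{L}_p$ — the identity $l_{\varepsilon}^{(p+2)}(x)=(p+1)!\,\varepsilon^{p+1}/(\varepsilon+x)^{p+2}$ together with the order-$(p+1)$ zero of $l_{\varepsilon}^{(1)}$ at the origin checks out — and the monotone limit $l_{\varepsilon}^{-1}\bigl(\tfrac{1}{p+1}\Vert X\Vert_{p+1}^{p+1}\bigr)\downarrow\Vert X\Vert_{p+1}$ is verified correctly, so your argument repairs this gap while keeping the growth $O(x^{p+1})$ needed for integrability under the sole hypothesis $X\in L^{p+1}$. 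The theorem's conclusion is unaffected (the infimum is still $\Vert X\Vert_{p+1}$, merely not attained within $\mathcal{L}_p$), but your version is the one that actually proves it as stated.
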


\begin{proof}
Let $p \geq 1$ be an integer. Suppose that $l \in \mathcal{L}_{p}$. 
Define $z(x) =l^{(2)}(x)x -pl^{(1)}(x)$ on $[0 ,\infty)$. Because  $l \in \mathcal{L}_{p}$ we have $z(x) \geq 0$ for all $x \geq 0$. In particular $z(0) = -l^{(1)}(0) p \geq 0$ which implies that $l^{(1)}(0)=0$. Thus, $z(0)=0$.  Note that  \begin{equation*}z^{(k)}(x) =l^{(k +2)}(x)x -l^{(k +1)}(x)(p -k)
\end{equation*}
for all $k=0,\ldots,p$. Assume in contradiction that there exists some $j = 2,\ldots,p$ such that $l^{(j)}(0) > 0$. Then there exists an integer $2 \leq m \leq p$ that satisfies  $l^{(m)}(0) > 0$ and $l^{(k)}(0) = 0$ for all  $k=1,\ldots,m-1$. We have  $z^{(k)}(0)=0$ for all $k=0,\ldots,m-2$ and $z^{(m-1)}(0) < 0$ which is a contradiction to the fact that $z^{(0)}(x):=z(x) \geq 0$ for all $x \geq 0$. We conclude that $l ^{(k)} (0) = 0$ for all $k=1,\ldots,p$. Because $l \in \mathcal{L}_{p}$ the function $l^{(p)}$ is convex and increasing. Thus, $l$ is a $(p,0,\infty)$-convex function. 

Using Theorem \ref{Corollary: jensen upper bound} we have 
\begin{equation*}
    \mathbb{E}l(X) \geq l \left ( \Vert X \Vert _{p+1} \right ) \Leftrightarrow l^{-1}\left (\mathbb{E}l(X)   \right ) \geq \Vert X \Vert _{p+1}
\end{equation*}
for all $l \in \mathcal{L}_{p}$. Thus, $\inf _{l \in \mathcal{L}_{p}} l^{-1}\left (\mathbb{E}l(X)   \right ) \geq  \Vert X \Vert _{p+1}$. On the other hand, the function $m(x) = x^{p+1}$ satisfies $$m^{(2)}(x)x= p(p+1)x^{p-1}x =p (p+1) x^{p} =pm^{(1)}(x)$$ for all $x \geq 0$ and $m^{(k)} > 0$ 
for all $k=1,\ldots,p$, $x>0$. Thus, $m \in \mathcal{L}_{p}$. We have $m^{-1}\left (\mathbb{E}m(X)   \right ) =  \Vert X \Vert _{p+1}$. Hence, $\inf _{l \in \mathcal{L}_{p}} l^{-1}\left (\mathbb{E}l(X)   \right ) \leq  \Vert X \Vert _{p+1}$. We conclude that $\inf _{l \in \mathcal{L}_{p}} l^{-1}\left (\mathbb{E}l(X)   \right ) = \Vert X \Vert _{p+1}$ which proves the Theorem.     
\end{proof}

\begin{remark}
Theorems \ref{Theorem pratt} and \ref{Theorem: risk measures} can also be applied to concave and increasing utility functions using Proposition \ref{Prop: JEnsen}. 
\end{remark}

\subsection{Poisson approximation in the Wasserstein distance}

The Poisson approximation of a sum of binary random variables has received extension attention in the literature (for example, see \cite{barbour1992poisson}). In this section, we leverage Theorem \ref{Thm: Jensen} to derive a simple Poisson approximation result for the sum of independent binary random variables in terms of the Wasserstein distance. The Wasserstein distance has many desirable properties and has been recently popularized in statistics and machine learning. The Poisson approximation result presented in this section is simple to derive and does not use the Stein-Chen method that is typically employed to derive Poisson approximation results \citep{barbour2006stein}. Our result improves a similar  Poisson approximation result that is given in \cite{boutsikas2000bound}. 

The Wasserstein distance between two random variables $X$ and $Y$ is given by 
$$ d_{W} (\mathscr{L}(X),\mathscr{L}(Y)) = \inf \mathbb{E} | U - V| $$ 
where the infimum  is over all couplings $(U,V)$ of $ \mathscr{L}(X)$ and $\mathscr{L}(Y)$. 
We denote by $Po (x)$ the Poisson distribution with parameter $x$ and by $F(x)$ the cumulative distribution function of the exponential distribution with parameter $1$, i.e., $F(x) = 1 -\exp(-x)$.

\begin{theorem} \label{Theorem Poisson}
Suppose that $X_{1},\ldots,X_{n}$ are independent binary random variables on $\{0,1\}$ such that $\mathbb{P}(X_{i} = 1) = p_{i} $. Let $S_{n} = \sum _{i=1}^{n} X_{i}$.  Then  
\begin{equation}\label{Ineq: main}
        d_{W} \left (\mathscr{L} (S_{n}), Po\left(  \sum_{i=1}^{n} p_{i}  \right ) \right )  \leq   2n \left (\frac { \sqrt{ \sum _{i=1}^{n} p_{i}^{2} }} { \sqrt{n}}- F\left (\frac {\sqrt{ \sum _{i=1}^{n} p_{i}^{2} }} { \sqrt{n} }\right ) \right ).  
\end{equation}
\end{theorem}

\begin{remark}
Lemma 7 in \cite{boutsikas2000bound} shows that \begin{equation*}
        d_{W} \left (\mathscr{L} (S_{n}), Po\left(  \sum_{i=1}^{n} p_{i}  \right ) \right )  \leq     \sum_{i=1}^{n} p_{i}^{2}.
\end{equation*}
We note that inequality (\ref{Ineq: main}) is tighter than the inequality above. To see this note that  
$$ 2(x-(1-\exp(-x)) \leq  x^{2} $$
for every positive $x$. Letting $x= \sqrt{ \sum _{i=1}^{n} p_{i}^{2} / n}$ shows that 
$$  2n \left (\frac { \sqrt{ \sum _{i=1}^{n} p_{i}^{2} }} { \sqrt{n}}- F\left (\frac {\sqrt{ \sum _{i=1}^{n} p_{i}^{2} }} { \sqrt{n} }\right ) \right )  \leq \sum _{i=1}^{n} p_{i}^{2}. $$
\end{remark}

\begin{proof}
Define the function $f(x) := \exp(-x) - 1  + x - x^{2}/2$. Note that $f^{(1)}(0) = -\exp(-0) + 1 = 0$. It is straightforward to check that $f^{(1)}$, $f^{(2)}$, and $f^{(3)}$ are non-positive on $[0,1]$ so $-f$ satisfies the conditions of Theorem \ref{Thm: Jensen}. Consider the random variable $X$ that yields $p_{i}$ with probability $1/n$. Using Theorem \ref{Thm: Jensen} with the random variable $X$ and the function $f$ yields 
$$ n^{-1} \left ( \sum _{i=1}^{n} \left ( \exp(-p_{i}) - 1 +  p_{i} - \frac{ p_{i}^{2}} { 2} \right )  \right )  \leq \exp\left ( - \sqrt{  n^{-1} \sum _{i=1} ^{n} p_{i}^{2}  } \right ) - 1 +  \sqrt { n^{-1} \sum _{i=1} ^{n} p_{i}^{2} } - \frac{n^{-1} \sum _{i=1} ^{n} p_{i}^{2} } { 2}. $$
Rearranging the last inequality yields 
\begin{equation} \label{Eq. 1}
    2  \sum _{i=1}^{n} \left ( \exp(-p_{i}) - 1 +  p_{i} \right ) \leq - 2n \left ( 1 - \exp\left ( - \sqrt { n^{-1} \sum _{i=1} ^{n} p_{i}^{2} }   \right )  \right ) + 2 \sqrt { n \sum _{i=1} ^{n} p_{i}^{2} }
\end{equation}
Suppose that $Y_{1},\ldots,Y_{n}$ are independent random variables such that $\mathscr{L}(Y_{i}) = Po(p_{i})$. We have 
\begin{align*}
    d_{W} \left (\mathscr{L} (S_{n}), Po\left(  \sum_{i=1}^{n} p_{i}  \right ) \right ) & \leq  \sum _{i=1}^{n} d_{W}(\mathscr{L} (X_{i} ), Po( p_{i}) ) \\
    & = \sum _{i=1}^{n} \sum _{j=0}^{\infty} \vert \mathbb{P}(X_{i} \leq j) -  \mathbb{P}(Y_{i} \leq j)  \vert  \\
    & = \sum _{i=1}^{n} \sum _{j=0}^{\infty} \vert \mathbb{P}(X_{i} > j) -  \mathbb{P}(Y_{i} > j)  \vert  \\ 
    & = \sum _{i=1}^{n} \left (\vert p_{i} - (1- \exp(-p_{i}) ) \vert + \sum _{j=1}^{\infty}   \mathbb{P}(Y_{i} > j)   \right ) \\
     & = \sum _{i=1}^{n} \left (\vert p_{i} - (1- \exp(-p_{i}) )\vert + \mathbb{E}(Y_{i}) - \mathbb{P}(Y_{i} > 0)  \right ) \\
     & = \sum _{i=1}^{n} \left (\vert p_{i} - (1- \exp(-p_{i}) )\vert + p_{i}  -(1- \exp(-p_{i}) )  \right ) 
     \\
     & = 2 \sum _{i=1}^{n} \left (\exp(-p_{i}) - 1 +  p_{i}   \right ) \\
     & \leq - 2n \left ( 1 - \exp\left ( - \sqrt { n^{-1} \sum _{i=1} ^{n} p_{i}^{2} }   \right )  \right ) + 2 \sqrt { n \sum _{i=1} ^{n} p_{i}^{2} }  \\
     & = 2n \left (\frac { \sqrt{ \sum _{i=1}^{n} p_{i}^{2} }} { \sqrt{n}}- F\left (\frac { \sqrt{ \sum _{i=1}^{n} p_{i}^{2} }} { \sqrt{n} }\right ) \right ) 
\end{align*} 
which proves the Theorem. The first inequality follows from the subadditivity of the Wasserstein distance. The second inequality follows from inequality (\ref{Eq. 1}). 
\end{proof}

 \subsection{Lower and upper bounds on the moment generating function} \label{Sec: bounds on the MGF}

In this section we leverage  Theorem \ref{Thm: Jensen} to provide bounds on the moment generating function of a random variable.

For every integer $p$ the exponential function $\exp (x)$ satisfies the conditions of Lemma \ref{lemma: Taylor}, and hence, the Taylor remainder of the exponential function $\exp (x) - \sum _{j=0} ^{p} x^{j} / j!$ is a $(p,0,b)$-convex function for all $p \geq 2$.  We leverage this fact to provide lower and upper bounds on the moment generating function of a random variable.
We first provide a lower bound on the moment generating function of a random variable that is bounded from below. The bounds depend on the random variable's first $p$ moments.\footnote{\cite{zhang2018non} show that lower bounds on the moment generating function can be used to prove lower tail bounds for random variables. }

\begin{corollary} \label{Coro: exp}
 Let $X \in L^{p}$ be a random variable on $[0 ,\infty)$ where $p$ is a positive integer. For all $s \geq 0$ we have
\begin{align} \label{Ineq: MGF lower bound} 
   \mathbb{E} \exp (sX)  \geq  \exp \left (s \Vert X \Vert _{p} \right ) -  \sum _{j=0} ^{p-1} \frac{s^{j}\Vert X \Vert _{p} ^{j}}{j!} + \mathbb{E} \left ( \sum _{j=0} ^{p-1} \frac{s^{j}X^{j}} {j!} \right )
\end{align} 
\end{corollary}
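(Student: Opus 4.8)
The plan is to apply Lemma~\ref{lemma: Taylor} and Theorem~\ref{Thm: Jensen} directly to the exponential function. First I would fix $s > 0$ (the case $s = 0$ being trivial, as both sides equal $1$) and consider the scaled function $h(x) := \exp(sx)$, which satisfies $h^{(p)}(x) = s^{p}\exp(sx)$. Since $s^{p}\exp(sx)$ is convex and increasing on $[0,\infty)$, Lemma~\ref{lemma: Taylor} tells me that the Taylor remainder $R_{h,p}(x) = \exp(sx) - \sum_{j=0}^{p} s^{j}x^{j}/j!$ is a $(p,0,b)$-convex function on any $[0,b]$. By the truncation remark following Theorem~\ref{Thm: Jensen} (part (i) of the Remark), the Jensen-type inequality \eqref{Ineq: Jensen} extends to random variables on $[0,\infty)$ in $L^{p+1}$, so I can work on the half-line provided the relevant integrability holds.

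Next I would apply Theorem~\ref{Thm: Jensen}(i) to $R_{h,p}$ with $a = 0$, which yields
\begin{equation*}
\mathbb{E}\,R_{h,p}(X) \geq R_{h,p}\!\left(\Vert X \Vert_{p+1}\right).
\end{equation*}
Writing both sides out using the definition of the remainder gives
\begin{equation*}
\mathbb{E}\exp(sX) - \mathbb{E}\!\left(\sum_{j=0}^{p} \frac{s^{j}X^{j}}{j!}\right) \geq \exp\!\left(s\Vert X \Vert_{p+1}\right) - \sum_{j=0}^{p} \frac{s^{j}\Vert X \Vert_{p+1}^{j}}{j!}.
\end{equation*}
Rearranging to isolate $\mathbb{E}\exp(sX)$ and moving the expectation of the polynomial to the right-hand side should reproduce the claimed bound, after matching the index ranges in the sums.

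The main obstacle I anticipate is an indexing discrepancy between what Theorem~\ref{Thm: Jensen} naturally produces and the statement of the corollary. Lemma~\ref{lemma: Taylor} applied to a function whose $p$th derivative is convex and increasing produces a $(p,0,b)$-convex function, and Theorem~\ref{Thm: Jensen} then delivers a bound in terms of $\Vert X \Vert_{p+1}$ and subtracts the Taylor polynomial of degree $p$. The corollary statement, however, has $X \in L^{p}$, uses the norm $\Vert X \Vert_{p}$, and subtracts only terms up to $j = p-1$. This suggests the intended application is at order $p-1$ rather than $p$: one should apply the lemma to obtain that $\exp(sx) - \sum_{j=0}^{p-1} s^{j}x^{j}/j!$ is $(p-1,0,b)$-convex (which requires $\exp$'s $(p-1)$th derivative to be convex and increasing, clearly true), and then invoke Theorem~\ref{Thm: Jensen} with parameter $p-1$, giving a bound in terms of $\Vert X \Vert_{(p-1)+1} = \Vert X \Vert_{p}$ and subtracting the degree-$(p-1)$ polynomial. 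I would therefore reconcile the corollary's indexing by using $p-1$ as the convexity order throughout, which also explains the $L^{p}$ integrability hypothesis.

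Finally, I would verify that the $L^{p}$ assumption suffices for all the expectations appearing to be finite: $\mathbb{E}\exp(sX)$ may a priori be infinite, but the inequality remains valid in that case since the right-hand side is finite whenever the first $p$ moments exist, and the polynomial moments $\mathbb{E}X^{j}$ for $j \leq p-1$ are controlled by $\Vert X \Vert_{p}$ via Jensen or Lyapunov's inequality. The verification that $R$ is genuinely $(p-1,0,b)$-convex is immediate from Lemma~\ref{lemma: Taylor}, so the real content is just the careful bookkeeping of indices and the truncation-to-half-line argument; no delicate estimate is required beyond what the earlier results already supply.
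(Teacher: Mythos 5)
Your proposal is correct and, after you resolve the indexing, it is essentially the paper's own argument: apply Lemma~\ref{lemma: Taylor} at order $p-1$ to conclude that $\exp(sx)-\sum_{j=0}^{p-1}s^{j}x^{j}/j!$ is $(p-1,0,\infty)$-convex, invoke Theorem~\ref{Thm: Jensen} together with the truncation remark to get the bound at $\Vert X\Vert_{p}$, and rearrange. Your additional care about integrability under the $L^{p}$ hypothesis is a nice touch but does not change the route.
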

\begin{proof}
Fix $s \geq 0$. From Lemma \ref{lemma: Taylor} the function  $\exp (x) - \sum _{j=0} ^{p-1} x^{j} / j!$ is $(p-1,0,\infty)$-convex. This implies that $g(x):=\exp (sx) - \sum _{j=0} ^{p-1} (sx)^{j} / j!$ is  $(p-1,0,\infty)$-convex. Applying Theorem \ref{Thm: Jensen} (see also Remark 1) for $g$ and rearranging yield inequality (\ref{Ineq: MGF lower bound}).
\end{proof}

Suppose that $X$ is a random variable on $[1,\infty)$. Then we can apply inequality (\ref{Ineq: MGF lower bound}) to the positive random variable $Y=\ln(X)$ and $s=1$ (assuming that $Y \in L^{p}$) to derive the following lower bound on the expected value of $X$: 
\begin{align} \label{Ineq: Lower bound on expected value} 
   \mathbb{E} X  \geq  \exp \left (\Vert \ln(X) \Vert _{p} \right ) -  \sum _{j=0} ^{p-1} \frac{\Vert \ln(X) \Vert _{p} ^{j}}{j!} + \mathbb{E} \left ( \sum _{j=0} ^{p-1} \frac{\ln(X)^{j}} {j!} \right )
\end{align} 
When $X$ is the random variable that yields $x_{i} > 0$, $i=1,\ldots,n$ with probability $1/n$ then inequality (\ref{Ineq: Lower bound on expected value}) generalizes the well-known AM-GM inequality which corresponds to $p=1$.

We now provide an upper bound on the moment generating function. 
\begin{corollary} \label{Coro: exp upper bound}
 Let $X$ be a random variable on $[0 ,b]$ for some $b>0$ where $p$ is a positive integer. For all $s \geq 0$ we have
\begin{align} \label{Ineq: MGF bound}
\mathbb{E} \exp(sX)  \leq \frac{\mathbb{E} X^{p} }{b^{p}}\left (\exp(sb) - \sum _{j=0} ^{p-1} \frac{s^{j}b^{j}}{j!}  \right )+ \mathbb{E} \left ( \sum _{j=0} ^{p-1} \frac{s^{j}X^{j}} {j!} \right ).
\end{align}  
 \end{corollary}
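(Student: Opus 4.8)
The plan is to mirror the proof of Corollary \ref{Coro: exp}, but to invoke the upper-bound companion Corollary \ref{Corollary: jensen upper bound} in place of Theorem \ref{Thm: Jensen}. Fix $s \geq 0$ and set $g(x) := \exp(sx) - \sum_{j=0}^{p-1} (sx)^{j}/j!$. Exactly as in the proof of Corollary \ref{Coro: exp}, Lemma \ref{lemma: Taylor} applied to the exponential function together with the affine rescaling $x \mapsto sx$ shows that $g$ is a $(p-1,0,\infty)$-convex function, and hence in particular a $(p-1,0,b)$-convex function on $[0,b]$.

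Next I would apply Corollary \ref{Corollary: jensen upper bound} to $g$ with $a=0$ and with convexity order $p-1$ in place of $p$. Because the exponent appearing in that corollary exceeds the convexity order by one, the relevant moment weight becomes $\mathbb{E}X^{p}/b^{p}$, so the corollary yields
$$\left(1 - \frac{\mathbb{E}X^{p}}{b^{p}}\right) g(0) + \frac{\mathbb{E}X^{p}}{b^{p}}\, g(b) \geq \mathbb{E}g(X).$$
It then remains to evaluate the boundary terms. At $x=0$ only the $j=0$ summand survives, so $g(0) = \exp(0) - 1 = 0$ and the first term drops out; at $x=b$ one has $g(b) = \exp(sb) - \sum_{j=0}^{p-1} s^{j}b^{j}/j!$. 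Expanding $\mathbb{E}g(X) = \mathbb{E}\exp(sX) - \mathbb{E}\big(\sum_{j=0}^{p-1} s^{j}X^{j}/j!\big)$ and rearranging to isolate $\mathbb{E}\exp(sX)$ gives inequality (\ref{Ineq: MGF bound}).

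The computation is routine; the only point needing care is the index bookkeeping. One must track that $g$ has convexity order $p-1$ rather than $p$, so that Corollary \ref{Corollary: jensen upper bound} contributes the weight $\mathbb{E}X^{p}/b^{p}$ (and not $\mathbb{E}X^{p+1}/b^{p+1}$), and one must verify that $g(0)=0$ so that the lower boundary term vanishes cleanly. The degenerate case $s=0$ makes $g \equiv 0$ and the inequality trivial, so no separate argument is needed there. In effect this corollary is the exact dual of Corollary \ref{Coro: exp}: the latter uses the Jensen-type lower bound of Theorem \ref{Thm: Jensen}, whereas here the two-point upper bound of Corollary \ref{Corollary: jensen upper bound} supplies the reverse inequality.
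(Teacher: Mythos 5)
Your proposal is correct and coincides with the paper's own proof: both obtain the $(p-1,0,b)$-convexity of $g(x)=\exp(sx)-\sum_{j=0}^{p-1}(sx)^{j}/j!$ from Lemma \ref{lemma: Taylor}, apply Corollary \ref{Corollary: jensen upper bound}, and use $g(0)=0$ to drop the lower boundary term. Your extra care with the index shift (order $p-1$ yielding the weight $\mathbb{E}X^{p}/b^{p}$) is exactly the bookkeeping the paper leaves implicit.
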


\begin{proof}
Fix $s \geq 0$. From Lemma \ref{lemma: Taylor} the function $g(x):=\exp (sx) - \sum _{j=0} ^{p-1} (sx)^{j} / j!$ is $(p-1,0,b)$-convex. Applying Corollary \ref{Corollary: jensen upper bound} to $g$ and noting that $g(0)=0$ prove the Corollary.  
\end{proof}

Corollary \ref{Coro: exp upper bound} is also proved in \cite{light2020concentration} using a different approach and is fundamental in deriving Hoeffding type concentration inequalities. 

\subsection{Lower bounds on the log-likelihood function} 

In the presence of hidden variables, lower bounds on the log-likelihood function are important in computing the maximum likelihood estimator. For example, the popular  expectation maximization algorithm \citep{dempster1977maximum} computes the maximum likelihood parameters using a lower bound on the  log-likelihood function. 

Consider the following standard estimation problem. We have a training set $x_{i} \in X$, latent (hidden) variables $z_{i}$, some parameters $\theta \in \Theta$, and a likelihood function $p(x,z|\theta)$ where $i=1,\ldots,n$. We assume for simplicity that there is a finite number of latent variables. The maximum likelihood estimate is determined by maximizing  
$$ l(\theta):=\sum _{i=1}^{n} \ln \left ( \sum_{z_{i}} p(x_{i},z_{i}|\theta) \right ) $$
with respect to $\theta$. In many practical cases, this optimization problem is not tractable and a lower bound for the log-likelihood function $l$ is essential for computing the  maximum likelihood estimate. The popular expectation maximization algorithm uses the following lower bound derived from Jensen's inequality:  
$$\sum _{i=1}^{n} \ln \left ( \sum_{z_{i}} p(x_{i},z_{i}|\theta) \right ) \geq \sum _{i=1}^{n} \sum_{z_{i}} q_{i} (z_{i}) \ln \left ( \frac {  p(x_{i},z_{i}|\theta) } {q_{i}(z_{i}) } \right )  $$
for any probability mass functions $q_{1},\ldots,q_{n}$ on $Z$, $q_{i} (z_{i}) > 0$ for all $z_{i}$, and $\sum_ {z_{i} } q_{i}(z_{i}) = 1$. 
Using Theorem \ref{Thm: Jensen} we provide a tighter bound for the log-likelihood function.

\begin{theorem}
For any probability mass functions $q_{1},\ldots,q_{n}$ on $Z$, $q_{i} (z_{i}) > 0$ for all $z_{i}$, and $\sum_ {z_{i} } q_{i}(z_{i}) = 1$ we have 
\begin{equation} \label{ineq: Likelihood}
   \sum _{i=1}^{n} \ln \left ( \sum_{z_{i}} p(x_{i},z_{i}|\theta) \right ) \geq \sum _{i=1}^{n} \left (\ln \left (b_{i} - \Vert b_{i} - X_{i} \Vert _{2} \right ) - \frac { b_{i} - \Vert b_{i} - X_{i} \Vert _{2} - \mathbb{E}(X_{i}) } { b_{i}}   \right ) \geq \sum _{i=1}^{n} \mathbb{E}\ln (X_{i}) 
\end{equation} 
where $X_{i}$ is the random variable that assigns the value $p(x_{i},z_{i}|\theta)/q_{i}(z_{i})$ with probability $q_{i}(z_{i})$ and $b_{i} = \max _{z_{i}} p(x_{i},z_{i}|\theta)/q_{i}(z_{i})$ for all $i=1,\ldots,n$. 
\end{theorem}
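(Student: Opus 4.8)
The plan is to prove the two inequalities in \eqref{ineq: Likelihood} term by term and then sum over $i$. First observe that, since $\sum_{z_i} p(x_i,z_i|\theta) = \sum_{z_i} q_i(z_i)\,\frac{p(x_i,z_i|\theta)}{q_i(z_i)} = \mathbb{E}(X_i)$, the left-hand side equals $\sum_{i=1}^n \ln(\mathbb{E}(X_i))$. Writing $M_i := \mathbb{E}(X_i)$ and $t_i := b_i - \Vert b_i - X_i \Vert_2$, the claim reduces, for each $i$, to the two-sided estimate
\[
\ln(M_i) \;\geq\; \ln(t_i) - \frac{t_i - M_i}{b_i} \;\geq\; \mathbb{E}\ln(X_i),
\]
after which summation over $i$ yields \eqref{ineq: Likelihood}. (If some $X_i$ is a.s.\ constant the interval degenerates to a point and all three quantities coincide, so one may assume each $X_i$ is non-degenerate.)

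For the right-hand inequality I would apply Proposition \ref{Prop: JEnsen} with $p=1$ to the auxiliary function $h_i(x) := -\ln(x) + x/b_i$ on $[a_i,b_i]$, where $a_i := \min_{z_i} p(x_i,z_i|\theta)/q_i(z_i) > 0$ is the smallest (positive) value taken by $X_i$. The key step is to verify that $h_i \in \mathfrak{D}(1,a_i,b_i)$: one checks $h_i'(x) = 1/b_i - 1/x \leq 0$, $h_i''(x) = 1/x^2 > 0$, and $h_i'''(x) = -2/x^3 < 0$ on $[a_i,b_i]$, so that $(-1)^k h_i^{(k)} \geq 0$ for $k=1,2,3$, together with the boundary condition $h_i'(b_i) = 0$. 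Proposition \ref{Prop: JEnsen} then gives $\mathbb{E} h_i(X_i) \geq h_i(t_i)$; expanding both sides, the logarithmic parts reproduce $-\mathbb{E}\ln(X_i)$ and $-\ln(t_i)$ while the linear tilt $x/b_i$ contributes $(M_i - t_i)/b_i$, and rearranging yields exactly $\mathbb{E}\ln(X_i) \leq \ln(t_i) - (t_i - M_i)/b_i$.

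For the left-hand inequality I would argue directly. Since $X_i \leq b_i$ almost surely, the monotonicity of $L^p$ norms on a probability space gives $\Vert b_i - X_i \Vert_2 \geq \Vert b_i - X_i \Vert_1 = \mathbb{E}(b_i - X_i) = b_i - M_i$, whence $t_i \leq M_i$; moreover $M_i \leq b_i$ and, because $X_i$ is strictly positive and bounded by $b_i$, $0 < t_i$. Hence on $[t_i, M_i] \subseteq (0, b_i]$ the integrand $1/s$ is at least $1/b_i$, so
\[
\ln(M_i) - \ln(t_i) = \int_{t_i}^{M_i} \frac{ds}{s} \;\geq\; \frac{M_i - t_i}{b_i} = -\frac{t_i - M_i}{b_i},
\]
which is precisely the desired bound.

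The main obstacle is the middle step: one must produce an auxiliary function that reproduces $\ln$ up to a controlled correction while simultaneously meeting the boundary condition $h_i'(b_i)=0$ required for membership in $\mathfrak{D}(1,a_i,b_i)$. The choice $h_i(x) = -\ln x + x/b_i$ does both at once — the linear tilt $x/b_i$ is the unique slope that annihilates the first derivative at $b_i$ — and it is exactly this linear correction that generates the extra summand $(b_i - \Vert b_i - X_i \Vert_2 - \mathbb{E}(X_i))/b_i$ appearing in the theorem. Everything else is a routine sign verification and algebraic rearrangement.
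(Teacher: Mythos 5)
Your proposal is correct and follows essentially the same route as the paper: the right-hand inequality comes from applying Proposition \ref{Prop: JEnsen} with $p=1$ to the tilted logarithm (you use $-\ln x + x/b_i \in \mathfrak{D}(1,a_i,b_i)$ where the paper works with its negative $\ln x - x/b_i$ and flips the inequality), and the left-hand inequality comes from $t_i \le M_i$ via monotonicity of the $L^p$ norms together with the bound $\ln(M_i)-\ln(t_i) \ge (M_i-t_i)/b_i$, which is the paper's concavity estimate in integral form. Your explicit verification that $t_i>0$ and the handling of the degenerate case are minor points the paper leaves implicit.
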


\begin{proof}
Consider the function $f(x):=\ln(x) - x/b$ on $(0,b]$, $b>0$. Then $f^{(1)}(b) = 0$, $f^{(1)} \geq 0$, $f^{(2)} \leq 0$, and $f^{(3)} \geq 0$. Thus, we can apply Proposition \ref{Prop: JEnsen} to conclude that $\mathbb{E}f(X) \leq  f (b - \Vert b - X \Vert _{2})$, i.e., $$\mathbb{E}\ln(X) - \frac { \mathbb{E}(X) } {b} \leq \ln (b - \Vert b - X \Vert _{2}) - \frac{(b - \Vert b - X \Vert _{2})}{b} $$  
for every random variable $X$ on $(0,b]$. Applying the last inequality for the random variables $X_{i}$'s defined in the statement of the Theorem and summing over $i$ yields the right-hand-side of inequality (\ref{ineq: Likelihood}). 

Using the monotonicity of the p-norm we have $b - \Vert b - X \Vert _{2} \leq b - \Vert b - X \Vert _{1} = \mathbb{E}X$ for every random variable $X$ on $(0,b]$. In addition, because $\ln ( \cdot )$ is a concave function, for every numbers $d,c$ such that $b \geq d > c>0$ we have $(\ln(d) - \ln (c) ) /(d-c) \geq 1/d \geq 1/b$. Thus, 
$b ( \ln(d) - \ln (c) ) \geq d-c$. Applying the last inequality for $d =\mathbb{E}X $ and $c= b - \Vert b - X \Vert _{2} $  yields 
$$ b( \ln (\mathbb{E}X) -  \ln ( b - \Vert b - X \Vert _{2}) ) \geq \mathbb{E}X - (b - \Vert b - X \Vert _{2})$$ 
for every random variable $X$ on $(0,b]$. 
 We have
\begin{align*}
     \sum _{i=1}^{n} \ln \left ( \sum_{z_{i}} p(x_{i},z_{i}|\theta) \right ) & =  \sum _{i=1}^{n} \ln \left (\sum_{z_{i}}  q_{i}(z_{i}) \frac {  p(x_{i},z_{i}|\theta) } {q_{i}(z_{i}) } \right ) \\
     & =\sum _{i=1}^{n} \ln \left ( \mathbb{E}X_{i} \right ) \\
     & \geq \sum _{i=1}^{n} \left (\ln \left (b_{i} - \Vert b_{i} - X_{i} \Vert _{2} \right ) - \frac { b_{i} - \Vert b_{i} - X_{i} \Vert _{2} - \mathbb{E}(X_{i}) } { b_{i}}   \right )
\end{align*}
which proves the left-hand-side of inequality  (\ref{ineq: Likelihood}).  
\end{proof}

\subsection{Hermite-Hadamard inequalities} \label{Sec: HH ineq}

Hermite-Hadamard type inequalities have numerous applications in various fields of mathematics (see \cite{dragomir2003selected}). 
The classical Hermite-Hadamard inequality states that for a convex function $f :\left [a ,b\right ] \rightarrow \mathbb{R}$ we have
\begin{equation}f\genfrac{(}{)}{}{}{a +b}{2} \leq \frac{1}{b -a}\int _{a}^{b}f(x)dx \leq \frac{f\left (a\right ) +f(b)}{2} . \label{Ineq: HH}
\end{equation}             

Generalizations and refinements of the Hermite-Hadamard  inequality have received a significant attention recently.\footnote{
See for example \cite{niculescu2009hermite}, \cite{de2009general},   \cite{sarikaya2013hermite}, \cite{mako2017approximate},   \cite{chen2017hermite}, and \cite{olbrys2019problem}.}

We now provide we provide a generalization of Hermite-Hadamard inequality for $(p,a,b)$-convex  functions. 

%We also prove  Hermite-Hadamard inequalities for fractional integrals that involve  $(p,a,b)$-convex  functions. 

\begin{theorem} \label{Theorem: NEW HH}
\label{Thm: HH}Fix an integer $p \geq 1$ and $a<b$. Let $f$ be a $(p-1,a,b)$-convex function. Then   
 \begin{equation}f\left (\frac{1}{(p +1)^{1/p}}b +\left (1 -\frac{1}{(p +1)^{1/p}}\right )a\right ) \leq \frac{1}{b -a}\int _{a}^{b}f(x)dx \leq \frac{p}{p +1}f(a) +\frac{1}{p +1}f(b) . \label{Ineq: NEW HH}
\end{equation}
\end{theorem}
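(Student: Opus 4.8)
The plan is to read the two outer terms of inequality (\ref{Ineq: NEW HH}) as instances of Theorem \ref{Thm: Jensen} and Corollary \ref{Corollary: jensen upper bound} applied to the uniform distribution on $[a,b]$. First I would let $X$ be uniformly distributed on $[a,b]$, so that $\mathbb{E} f(X) = \frac{1}{b-a}\int_a^b f(x)\,dx$ is exactly the middle quantity in (\ref{Ineq: NEW HH}). The single computation that drives the whole argument is the moment
\[
\mathbb{E}(X-a)^{p} = \frac{1}{b-a}\int_a^b (x-a)^p\,dx = \frac{(b-a)^p}{p+1},
\]
which gives both $\frac{\mathbb{E}(X-a)^p}{(b-a)^p} = \frac{1}{p+1}$ and $\Vert X-a\Vert_p = (b-a)/(p+1)^{1/p}$.

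For the upper bound I would apply Corollary \ref{Corollary: jensen upper bound} to the $(p-1,a,b)$-convex function $f$, i.e.\ with the parameter $p-1$ in place of $p$; the relevant exponent is then $(p-1)+1 = p$ and the right-hand coefficient is precisely $\mathbb{E}(X-a)^p/(b-a)^p = 1/(p+1)$. The corollary immediately yields $\mathbb{E} f(X) \leq \frac{p}{p+1}f(a) + \frac{1}{p+1}f(b)$, which is the right inequality in (\ref{Ineq: NEW HH}). For the lower bound I would apply Theorem \ref{Thm: Jensen}(i) to the same function, again with parameter $p-1$, obtaining $\mathbb{E} f(X) \geq f(a + \Vert X-a\Vert_p)$. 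Substituting $\Vert X-a\Vert_p = (b-a)/(p+1)^{1/p}$ and rewriting $a + (b-a)/(p+1)^{1/p} = \frac{1}{(p+1)^{1/p}}b + \bigl(1 - \frac{1}{(p+1)^{1/p}}\bigr)a$ produces the left inequality in (\ref{Ineq: NEW HH}).

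The only point requiring care is the index. Theorem \ref{Thm: Jensen} and Corollary \ref{Corollary: jensen upper bound} are stated for an integer $p \geq 1$, so applying them with parameter $p-1$ is legitimate whenever $p \geq 2$. For $p=1$ the hypothesis reduces to $f$ being convex, $(p+1)^{1/p} = 2$, and (\ref{Ineq: NEW HH}) becomes the classical Hermite-Hadamard inequality (\ref{Ineq: HH}); there the lower bound is ordinary Jensen's inequality $\mathbb{E} f(X) \geq f(\mathbb{E} X) = f\bigl(\frac{a+b}{2}\bigr)$ and the upper bound is the standard secant-line estimate for convex functions, both of which are exactly the $p=1$ limits of the two results invoked above. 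I do not expect any genuine obstacle: the entire content of the theorem is the moment identity for the uniform law together with the algebraic rewriting of the affine argument on the left.
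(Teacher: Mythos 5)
Your proof is correct and follows essentially the same route as the paper: the lower bound is Theorem \ref{Thm: Jensen} applied to the uniform law on $[a,b]$ with the moment $\mathbb{E}(X-a)^{p}=(b-a)^{p}/(p+1)$, and the upper bound is the two-point secant estimate, which the paper re-derives pointwise and integrates over $\lambda$ whereas you obtain it by citing Corollary \ref{Corollary: jensen upper bound} directly (whose proof is exactly that argument in expectation form). Your explicit treatment of the $p=1$ index edge case, where the cited results are stated only for parameter $\geq 1$ and the claim degenerates to classical Hermite--Hadamard, is a small point the paper leaves implicit.
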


\begin{proof}
Let $f$ be a $(p-1,a,b)$-convex function for some $p \geq 1$ and $a <b$.   

Let $X$ be the continuous uniform random variable on $[a ,b]$, i.e., $\mathbb{P}(X \in [a,d] ) = (d-a)/(b-a)$ for all $ d \in [a,b]$. Then Theorem \ref{Thm: Jensen} implies that 
\begin{align*}\frac{1}{b -a}\int _{a}^{b}f(x)dx &  \geq f\left (a +\left (\int _{a}^{b}\left (x -a\right )^{p}(b -a)^{ -1}dx\right )^{1/p}\right ) \\
 & = f\left (a +\genfrac{(}{)}{}{}{(b -a)^{p +1}}{(p +1)(b -a)}^{1/p}\right ) \\
 & = f\left (\frac{1}{(p +1)^{1/p}}b +\left (1 -\frac{1}{(p +1)^{1/p}}\right )a\right )\end{align*}
 which proves the left-hand-side of inequality (\ref{Ineq: NEW HH}). 

Consider the random variable that yields $a$ with probability $t \in (0,1)$ and $b$ with probability $1- t$. From Theorem \ref{Thm: Jensen} for all $f \in \mathfrak{I}(p-1,a,b)$ we have 
\begin{equation*}
    (1-t)f(a) + tf(b) \geq f\left (a+(t(b-a)^{p})^{1/p} \right ) = f(a+t^{1/p}(b-a)). 
\end{equation*}  

Let $t= \lambda^{1/p}$. We conclude that
\begin{align*} f(\lambda b +(1 -\lambda )a) \leq \lambda ^{p}f(b) +\left (1 -\lambda ^{p}\right )f(a)\end{align*}
for all $0 \leq \lambda  \leq 1$.
Integrating both sides of the last inequality implies
\begin{align*}& \int _{0}^{1}f\left (\lambda a +\left (1 -\lambda \right )b\right )d\lambda    \leq f(b)\int _{0}^{1}\lambda ^{p}d\lambda  +f\left (a\right )\int _{0}^{1}\left (1 -\lambda ^{p}\right )d\lambda  \\
 &  \Leftrightarrow \frac{1}{b -a}\int _{a}^{b}f(x)dx \leq \frac{1}{p +1}f(b) +\frac{p}{p +1}f(a)\end{align*}
 which proves the right-hand-side of inequality (\ref{Ineq: NEW HH}). 
\end{proof}

\begin{remark}

For all $p \geq 1$, inequality (\ref{Ineq: NEW HH}) provides a tighter lower and an upper bound on  $\frac{1}{b -a}\int _{a}^{b}f(x)dx$ than inequality (\ref{Ineq: HH}) because 
\begin{equation*}\frac{f(a) +f(b)}{2} \geq \frac{p}{p +1}f(a) +\frac{1}{p +1}f(b)
\end{equation*} and 
\begin{equation*}\frac{1}{(p +1)^{1/p}}b +\left (1 -\frac{1}{(p +1)^{1/p}}\right )a \geq \frac{a +b}{2} .
\end{equation*}
\end{remark}

%(ii) Theorem \ref{Theorem: NEW HH} was proved in \cite{Light2019} for the case $p=2$ using a stochastic orders approach. 

Theorem \ref{Ineq: Jensen} can be used for deriving inequalities for the Taylor reminder of some functions of interest. 
From Lemma \ref{lemma: Taylor} the function $T_{p}(x) : = \exp(x) -  \sum _{j=0} ^{p} x^{j} / j!$ is $(p,0,b)$-convex. Applying Theorem \ref{Theorem: NEW HH} for the $(p-1,0,b)$-convex function $T_{p-1}$, and noting that $T_{p-1}(0)=0$ and $\int _{0}^{b}T_{p-1}(x)dx = T_{p}(b)$ yield  the bounds:
 \begin{equation}T_{p-1} \left (\frac{1}{(p +1)^{1/p}}b \right ) \leq \frac{T_{p}(b)}{b} \leq  \frac{1}{p +1}T_{p-1}(b) 
 \end{equation}
for all $b>0$ and every integer $p \geq 1$.

Many Hermite-Hadamard type inequalities that hold for convex functions can be generalized for $(p,a,b)$-convex functions. We provide two examples. First, we  generalize an inequality for differentiable convex mappings that was proved in \cite{dragomir1998two} (see Theorem \ref{Thm: differe}). Second, in Section \ref{Fractional} we provide inequalities for fractional integrals (see Theorem \ref{Thm: HH Fractional}).

\begin{theorem} \label{Thm: differe}
 Suppose that $f :[a ,b] \rightarrow \mathbb{R}$ is differentiable on $[a ,b]$. Suppose that $\vert f^{ \prime }\vert $ is a $(p-1,a,b)$-convex  function. Then the following inequality holds: 
 $$ \left \vert \frac{f(a) +f(b)}{2} -\frac{1}{(b -a)}\int _{a}^{b}f(x)dx\right \vert \leq \frac{(b -a)}{4} \left [ \frac{2(p +0.5^{p})}{(p +1)(p +2)}\vert f^{ \prime }(a)\vert  +\left (1 -\frac{2(p +0.5^{p})}{(p +1)(p +2)}\right )\left \vert f^{ \prime}(b)\right \vert \right].$$
\end{theorem}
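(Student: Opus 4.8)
The plan is to reduce the estimate to a single moment computation via the classical trapezoidal (Dragomir--Agarwal) integral representation. First I would establish, by integrating by parts, the identity
\begin{equation*}
\frac{f(a)+f(b)}{2}-\frac{1}{b-a}\int_a^b f(x)\,dx=\frac{b-a}{2}\int_0^1(1-2t)\,f'\bigl(tb+(1-t)a\bigr)\,dt,
\end{equation*}
which is valid for any differentiable $f$ and is the standard starting point for such bounds. Taking absolute values and moving them inside the integral gives
\begin{equation*}
\left|\frac{f(a)+f(b)}{2}-\frac{1}{b-a}\int_a^b f(x)\,dx\right|\leq\frac{b-a}{2}\int_0^1|1-2t|\,\bigl|f'(tb+(1-t)a)\bigr|\,dt.
\end{equation*}

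Next I would feed in the hypothesis that $|f'|$ is $(p-1,a,b)$-convex. The right tool is the sub-chord inequality already derived inside the proof of Theorem~\ref{Theorem: NEW HH}: for any $(p-1,a,b)$-convex function $g$ and any $\lambda\in[0,1]$,
\begin{equation*}
g\bigl(\lambda b+(1-\lambda)a\bigr)\leq\lambda^{p}g(b)+\bigl(1-\lambda^{p}\bigr)g(a).
\end{equation*}
Applying this with $g=|f'|$ and $\lambda=t$ bounds the integrand pointwise, so the right-hand side splits as $|f'(b)|\int_0^1|1-2t|\,t^{p}\,dt+|f'(a)|\int_0^1|1-2t|\,(1-t^{p})\,dt$, and the whole problem collapses to evaluating two elementary weighted moments of $|1-2t|$.

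The remaining work is purely that moment evaluation. I would record $\int_0^1|1-2t|\,dt=\frac{1}{2}$ and then, splitting at $t=\frac{1}{2}$ where $|1-2t|$ changes sign, compute
\begin{equation*}
\int_0^1|1-2t|\,t^{p}\,dt=\frac{p+0.5^{p}}{(p+1)(p+2)},
\end{equation*}
so that the complementary moment $\int_0^1|1-2t|(1-t^{p})\,dt$ equals $\frac{1}{2}$ minus this quantity. Substituting both values and factoring $\frac{b-a}{4}$ out front reproduces the coefficient $\frac{2(p+0.5^{p})}{(p+1)(p+2)}$ appearing in the theorem, with the remaining endpoint carrying the complementary weight $1-\frac{2(p+0.5^{p})}{(p+1)(p+2)}$.

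I expect the conceptual content to be light; the genuine care is bookkeeping. The integration by parts must be done with the correct sign and $(b-a)$-scaling, and the split-integral evaluation of $\int_0^1|1-2t|\,t^{p}\,dt$ should be carried out cleanly. The one subtle point I would watch is the endpoint matching: because the convexity inequality is anchored at $a$ (where the derivatives of $|f'|$ vanish), the orientation of the parametrization $x=tb+(1-t)a$ dictates which of $|f'(a)|,|f'(b)|$ the factor $\frac{2(p+0.5^{p})}{(p+1)(p+2)}$ attaches to, so the labeling must be tracked consistently. As a sanity check, at $p=1$ the function $|f'|$ is merely convex, the weight becomes $\frac{1}{4}$ on each endpoint, and the bound collapses to the original symmetric Dragomir--Agarwal estimate $\frac{b-a}{8}\bigl(|f'(a)|+|f'(b)|\bigr)$.
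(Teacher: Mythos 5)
Your plan follows the paper's proof step for step: the same Dragomir--Agarwal identity, the same sub-chord inequality $g(\lambda b+(1-\lambda)a)\leq\lambda^{p}g(b)+(1-\lambda^{p})g(a)$ extracted from the proof of Theorem \ref{Theorem: NEW HH}, and the same two moment integrals, whose values you state correctly. However, the ``subtle point'' you flag about endpoint matching is not a bookkeeping detail you can defer --- it is the one place where your derivation and the stated theorem genuinely part ways. With your parametrization $x=tb+(1-t)a$ the weight on $b$ is $t$, so the sub-chord inequality gives $|f'(tb+(1-t)a)|\leq t^{p}|f'(b)|+(1-t^{p})|f'(a)|$, and after integrating against $|1-2t|$ the coefficient $\frac{2(p+0.5^{p})}{(p+1)(p+2)}$ lands on $|f'(b)|$, not on $|f'(a)|$ as in the theorem. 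For $p\geq 2$ these are different inequalities, so your computation does not ``reproduce'' the stated bound; it produces its mirror image.

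The fix is one extra line. A $(p-1,a,b)$-convex function is increasing for $p\geq 2$ (for $p=1$ both weights equal $\tfrac12$ and there is nothing to check), so $|f'(a)|\leq|f'(b)|$; and since $\frac{2(p+0.5^{p})}{(p+1)(p+2)}\leq\tfrac12$, transferring the smaller weight from $|f'(b)|$ to $|f'(a)|$ only increases the right-hand side. Hence the bound you actually derive implies the theorem's, and is in fact sharper. Note that the paper's own proof makes the same silent use of monotonicity: it bounds $|f'(ta+(1-t)b)|$ by $t^{p}|f'(a)|+(1-t^{p})|f'(b)|$, which is not the sub-chord inequality anchored at $a$ (that would give $(1-t)^{p}|f'(b)|+\left(1-(1-t)^{p}\right)|f'(a)|$) but a weaker consequence of it obtained from $t^{p}+(1-t)^{p}\leq 1$ together with $|f'(a)|\leq|f'(b)|$. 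So either close your argument with the monotonicity remark to land on the stated inequality, or record the stronger mirror-image bound you actually prove.
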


\begin{proof}
We have 
\begin{align*}\left \vert \frac{f(a) +f(b)}{2} -\frac{1}{(b -a)}\int _{a}^{b}f(x)dx\right \vert & =\left \vert \frac{b -a}{2}\int _{0}^{1}(1 -2t)f^{ \prime }(ta +(1 -t)b)dt\right \vert  \\
&\leq \frac{b -a}{2}\int _{0}^{1}\vert 1 -2t\vert \left \vert f^{ \prime }(ta +(1-t)b )\right \vert dt \\
 &\leq \frac{b -a}{2}\int _{0}^{1}\vert 2t - 1 \vert \left [t^{p}\vert f^{\prime }(a)\vert  +(1 -t^{p})\vert f^{ \prime }(b)\vert \right ]dt \\
 &=\frac{(b -a)}{4} \left [ \frac{2(p +0.5^{p})}{(p +1)(p +2)}\vert f^{ \prime }(a)\vert  +\left (1 -\frac{2(p +0.5^{p})}{(p +1)(p +2)}\right )\left \vert f^{ \prime}(b)\right \vert \right].\end{align*}
The first equality follows from Lemma 2.1 in \cite{dragomir1998two}. The second inequality follows because $f$ is a $(p-1,a,b)$-convex  function (see  the proof of Theorem \ref{Thm: HH}). The last equality follows from noting that 
$$\int _{0}^{0.5}(1 -2t)t^{p} =\frac{0.5^{p +1}}{(p +1)(p +2)} \text { and } \int _{0.5}^{1}(2t -1)t^{p} =\frac{p +0.5^{p +1}}{(p +1)(p +2)}.$$
Thus,  
$$\int _{0}^{1}\vert 2t -1\vert t^{p} =\frac{p +0.5^{p}}{(p +1)(p +2)} \text{ and  } \int _{0}^{1}\vert 2t -1\vert (1-t^{p}) = 0.5 - \frac{p +0.5^{p}}{(p +1)(p +2)}.$$
\end{proof}

\subsection{Inequalities for fractional integrals}\label{Fractional}

We now state and prove Hermite-Hadamard inequalities that involve fractional integrals and $(p,a,b)$-convex functions.

\begin{definition}
Let $f :[a ,b] \rightarrow \mathbb{R}$, $0 \leq a<b$. The Riemann--Liouville integrals $I_{a +}^{\alpha }f$ and $I_{b -}^{\alpha }f$ of order $\alpha  >0$ are defined by \begin{equation*}I_{a +}^{\alpha }f(x) : =\frac{1}{\Gamma (\alpha )}\int _{a}^{x}(x -t)^{\alpha  -1}f(t)dt ,
\end{equation*} 
for $x >a$ and \begin{equation*}I_{b -}^{\alpha }f(x) : =\frac{1}{\Gamma (\alpha )}\int _{x}^{b}(t -x)^{\alpha  -1}f(t)dt,
\end{equation*}
for $x <b$ where $\Gamma (\alpha )$ is the gamma function and $I_{a +}^{0}f(x) =I_{b-}^{0}f(x) =f(x)$. 
\end{definition}

\cite{sarikaya2013hermite} prove the following Hermite-Hadamard inequality for fractional integrals: Let $f:[a,b] \rightarrow \mathbb{R}$, $0 \leq a < b$ be a positive and convex function. We have
\begin{equation}f\genfrac{(}{)}{}{}{a +b}{2} \leq \frac{\Gamma (\alpha  +1)}{2(b -a)^{\alpha }}\left (I_{a +}^{\alpha }f(b) +I_{b -}^{\alpha }f(a)\right ) \leq \frac{f(a) +f(b)}{2} . \label{Ineq: HH-fractional}
\end{equation}
We now generalize the last inequality for the class of $(p,a,b)$-convex  functions.

For $p \geq 1$ and $\alpha \geq 0$, define \begin{equation}\gamma (p,\alpha) : =\frac{\alpha }{2(\alpha  +p)} +\frac{\Gamma (\alpha  +1)\Gamma (p +1)}{2\Gamma (\alpha  +p +1)} .
\end{equation}

\begin{theorem}  \label{Thm: HH Fractional}
Let $p \geq 1$ be an integer and let $f$ be a $(p-1,a,b)$-convex function where $0 \leq a < b$. Then for every $\alpha > 0 $ we have
\begin{equation} \label{Ineq: Fractional NEW}
f\left (\gamma (p,\alpha)^{1/p}b +(1 -\gamma (p,\alpha)^{1/p})a\right ) \leq \frac{\Gamma (\alpha  +1)}{2(b -a)^{\alpha }}\left (I_{a +}^{\alpha }f(b) +I_{b -}^{\alpha }f(a)\right ) \leq \gamma (p,\alpha)f(b) +(1 -\gamma (p,\alpha))f(a) .
\end{equation}
\end{theorem}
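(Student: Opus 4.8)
The plan is to reduce the fractional-integral average to an ordinary weighted integral over $[0,1]$, recognize it as an expectation $\mathbb{E}f(X)$ for a suitable random variable $X$ on $[a,b]$, and then apply Theorem \ref{Thm: Jensen} for the lower bound and the pointwise estimate from the proof of Theorem \ref{Thm: HH} for the upper bound. First I would substitute $t = a+\lambda(b-a) = \lambda b + (1-\lambda)a$ in both Riemann--Liouville integrals. Since $b-t = (1-\lambda)(b-a)$ and $t-a = \lambda(b-a)$, this gives $I_{a+}^{\alpha}f(b) = \frac{(b-a)^{\alpha}}{\Gamma(\alpha)}\int_0^1 (1-\lambda)^{\alpha-1} f(\lambda b + (1-\lambda)a)\,d\lambda$ and the symmetric expression for $I_{b-}^{\alpha}f(a)$ with $\lambda^{\alpha-1}$ replacing $(1-\lambda)^{\alpha-1}$. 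Using $\Gamma(\alpha+1)=\alpha\Gamma(\alpha)$, the middle quantity becomes
$$ M := \frac{\Gamma(\alpha+1)}{2(b-a)^{\alpha}}\left(I_{a+}^{\alpha}f(b) + I_{b-}^{\alpha}f(a)\right) = \frac{\alpha}{2}\int_0^1 \left[(1-\lambda)^{\alpha-1}+\lambda^{\alpha-1}\right] f(\lambda b + (1-\lambda)a)\,d\lambda. $$
Because $\int_0^1 \frac{\alpha}{2}\left[(1-\lambda)^{\alpha-1}+\lambda^{\alpha-1}\right]d\lambda = 1$, the weight $w(\lambda):=\frac{\alpha}{2}\left[(1-\lambda)^{\alpha-1}+\lambda^{\alpha-1}\right]$ is a probability density on $[0,1]$, so $M = \mathbb{E}f(X)$ where $X = a + \Lambda(b-a)$ and $\Lambda$ has density $w$.

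For the lower bound I would apply Theorem \ref{Thm: Jensen}(i) to the $(p-1,a,b)$-convex function $f$ and the random variable $X$ on $[a,b]$ (whose associated exponent is $p$), which gives $\mathbb{E}f(X) \geq f\left(a + (\mathbb{E}(X-a)^{p})^{1/p}\right)$. Since $X-a = \Lambda(b-a)$, the key quantity is the moment $\mathbb{E}\Lambda^{p} = \frac{\alpha}{2}\int_0^1 \lambda^{p}\left[(1-\lambda)^{\alpha-1}+\lambda^{\alpha-1}\right]d\lambda$. Evaluating the two pieces by the Beta integral $\int_0^1 \lambda^{p}(1-\lambda)^{\alpha-1}d\lambda = \Gamma(p+1)\Gamma(\alpha)/\Gamma(p+\alpha+1)$ and $\int_0^1 \lambda^{p+\alpha-1}d\lambda = 1/(p+\alpha)$, and again using $\alpha\Gamma(\alpha)=\Gamma(\alpha+1)$, this simplifies to exactly $\gamma(p,\alpha)$. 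Hence $a + (\mathbb{E}(X-a)^{p})^{1/p} = a + (b-a)\gamma(p,\alpha)^{1/p} = \gamma(p,\alpha)^{1/p}b + (1-\gamma(p,\alpha)^{1/p})a$, which is the left-hand side of inequality (\ref{Ineq: Fractional NEW}).

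For the upper bound I would invoke the pointwise estimate established in the proof of Theorem \ref{Thm: HH}: for a $(p-1,a,b)$-convex $f$ and $0\leq\lambda\leq 1$ one has $f(\lambda b + (1-\lambda)a) \leq \lambda^{p}f(b) + (1-\lambda^{p})f(a)$. Integrating this against the density $w$ and using $\int_0^1 w\,d\lambda = 1$ yields $M \leq \left(\mathbb{E}\Lambda^{p}\right)f(b) + \left(1-\mathbb{E}\Lambda^{p}\right)f(a)$, and the coefficient $\mathbb{E}\Lambda^{p}$ is the same Beta computation as above, namely $\gamma(p,\alpha)$. This gives $M \leq \gamma(p,\alpha)f(b) + (1-\gamma(p,\alpha))f(a)$, the right-hand side of (\ref{Ineq: Fractional NEW}).

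The only nonroutine part is the Beta-integral evaluation, and the point worth emphasizing is structural rather than computational: the single quantity $\gamma(p,\alpha) = \mathbb{E}\Lambda^{p}$ simultaneously controls the interpolation point in the lower bound and the weight on $f(b)$ in the upper bound, which is precisely why the stated definition of $\gamma(p,\alpha)$ is the natural one and what ties the two halves together. A minor technical check is that $f$ is only required to be $(p-1,a,b)$-convex on the bounded interval $[a,b]$, so all substitutions and the application of Theorem \ref{Thm: Jensen} are valid there; the classical inequality (\ref{Ineq: HH-fractional}) of \cite{sarikaya2013hermite} is recovered in the case $p=1$, where $\gamma(1,\alpha)=\tfrac12$.
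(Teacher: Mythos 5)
Your proof is correct and follows essentially the same route as the paper: both identify the fractional-integral average as $\mathbb{E}f(X)$ for the random variable whose density is proportional to $(x-a)^{\alpha-1}+(b-x)^{\alpha-1}$ on $[a,b]$, apply Theorem \ref{Thm: Jensen} together with the Beta-integral evaluation of the $p$-th moment (yielding $\gamma(p,\alpha)$) for the lower bound, and integrate the pointwise estimate $f(\lambda b+(1-\lambda)a)\leq\lambda^{p}f(b)+(1-\lambda^{p})f(a)$ for the upper bound. The only cosmetic difference is that you push everything forward to a single density on $[0,1]$ and integrate one pointwise inequality against the symmetrized weight, whereas the paper adds that inequality to its reflection and integrates against $\lambda^{\alpha-1}$ alone --- an equivalent bookkeeping choice.
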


\begin{proof}
Let $p \geq 1$ be an integer and let $\alpha > 0$. Let $X$ be a random variable on $[a ,b]$ whose probability density function $g$ is given by \begin{equation*}g(x) =\frac{\alpha }{2(b -a)^{\alpha }}\left (\left (x -a\right )^{\alpha  -1} +\left (b -x\right )^{\alpha  -1}\right ) \text{ on } [a,b].
\end{equation*}Note that $\int _{a}^{b}g(x) dx =1$ and $g(x) \geq 0$ for all $x \in [a ,b]$ so $g$ is a density function. 

We have 
\begin{equation*}\mathbb{E}f(X) =\frac{\alpha }{2(b -a)^{\alpha }}\int _{a}^{b}f(x)\left (\left (x -a\right )^{\alpha  -1} +\left (b -x\right )^{\alpha  -1}\right )dx =\frac{\Gamma (\alpha  +1)}{2(b -a)^{\alpha }}\left (I_{b -}^{\alpha }f(a) +I_{a +}^{\alpha }f(b)\right )
\end{equation*}
where we use the fact that $\alpha \Gamma (\alpha ) =\Gamma (\alpha  +1)$.

We also have
\begin{align*}f\left (a +\left (\mathbb{E}\left (X -a\right )^{p}\right )^{1/p}\right ) &  =f\left (a +\left (\frac{\alpha }{2(b -a)^{\alpha }}\int _{a}^{b}\left (x -a\right )^{p}\left (\left (x -a\right )^{\alpha  -1} +\left (b -x\right )^{\alpha  -1}\right )dx\right )^{1/p}\right ) \\
 &  =f\left (a +\left (\frac{\alpha }{2(b -a)^{\alpha }}\left (\frac{(b -a)^{\alpha  +p}}{\alpha  +p} +\frac{\Gamma (\alpha )\Gamma (p +1)(b -a)^{\alpha  +p}}{\Gamma (\alpha  +p +1)}\right )\right )^{1/p}\right ) \\
 &  =f\left (a +\left (\frac{(b -a)^{p}}{2}\left (\frac{\alpha }{\alpha  +p} +\frac{\Gamma (\alpha  +1)\Gamma (p +1)}{\Gamma (\alpha  +p +1)}\right )\right )^{1/p}\right ) \\
 &  =f\left (\gamma (p,\alpha)^{1/p}b +(1 -\gamma (p,\alpha)^{1/p})a\right )\end{align*}The second equality follows from\ \begin{equation*}\int _{a}^{b}(x -a)^{p}(x -a)^{\alpha  -1}dx =(b -a)^{\alpha  +p}/(\alpha  +p)
\end{equation*}\ and  \begin{equation*}\int _{a}^{b}(x -a)^{p}(b -x)^{\alpha  -1}dx =\frac{\Gamma (\alpha )\Gamma (p +1)(b -a)^{p +\alpha }}{\Gamma (p +\alpha  +1)} .
\end{equation*}
Hence, for a $(p-1,a,b)$-convex function $f$ we can use Theorem \ref{Thm: Jensen} to conclude that 
\begin{equation*}f\left (\gamma (p,\alpha)^{1/p}b +(1 -\gamma (p,\alpha)^{1/p})a\right ) \leq \frac{\Gamma (\alpha  +1)}{2(b -a)^{\alpha }}\left (I_{a +}^{\alpha }f(b) +I_{b -}^{\alpha }f(a)\right )
\end{equation*}
which proves the left-hand-side of inequality (\ref{Ineq: Fractional NEW}). 

Let $f$ be a $(p-1,a,b)$-convex function. 
From the proof of Theorem \ref{Thm: Jensen} we have 
\begin{equation*}f(\lambda b +(1 -\lambda )a) \leq \lambda ^{p}f(b) +\left (1 -\lambda ^{p}\right )f(a)
\end{equation*}and \begin{equation*}f((1 -\lambda )b +\lambda a) \leq (1 -\lambda )^{p}f(b) +\left (1 -(1 -\lambda )^{p}\right )f(a)
\end{equation*}

for all $0 \leq \lambda  \leq 1$. 
Adding the last two inequalities yields
\begin{equation*}f(\lambda b +(1 -\lambda )a) +f((1 -\lambda )b +\lambda a) \leq f(b)(\lambda ^{p} +(1 -\lambda )^{p}) +f(a)(2 -\lambda ^{p} -(1 -\lambda )^{p}) .
\end{equation*} Multiplying each side of the last inequality by $\lambda ^{\alpha  -1}$ and integrating with respect to $\lambda $ over $[0,1]$ yield 
\begin{align} \label{Ineq: PROOF FRACTIONAL}
    \begin{split}
         & \int_{0}^{1}  \lambda^{\alpha -1}  f(\lambda b +(1 -\lambda )a) d\lambda +\int_{0}^{1} \lambda^{\alpha -1} f((1 -\lambda )b +\lambda a)  d\lambda\\ 
     & \leq  f(b) \int_{0}^{1} \lambda^{\alpha -1} (\lambda ^{p} +(1 -\lambda )^{p})  d\lambda +  f(a)\int_{0}^{1}  \lambda^{\alpha -1} (2 -\lambda ^{p} -(1 -\lambda )^{p})  d\lambda.
     \end{split}
\end{align}
Note that 
$$  \int_{0}^{1}  \lambda^{\alpha -1}  f(\lambda b +(1 -\lambda )a) d\lambda =  \int_{a}^{b}  \left (\frac{x-a}{b-a} \right )^{\alpha -1}  \frac{f(x) }{b-a} dx =\frac{1}{(b-a)^{\alpha}}\Gamma (\alpha ) I_{b -}^{\alpha } f(a).$$ 
Similarly 
$$ \int_{0}^{1} \lambda^{\alpha -1} f((1 -\lambda )b +\lambda a)  d\lambda= \frac{1}{(b-a)^{\alpha}}\Gamma (\alpha ) I_{a +}^{\alpha }f(b). $$
Using inequality (\ref{Ineq: PROOF FRACTIONAL}) yields
\begin{equation*}\frac{\Gamma (\alpha )}{(b -a)^{\alpha }}\left (\int _{a +}^{\alpha }f(b) +\int _{b -}^{\alpha }f(a)\right ) \leq f(b)\left (\frac{1}{\alpha  +p} +\frac{\Gamma (\alpha )\Gamma (p +1)}{\Gamma (\alpha  +p +1)}\right ) +f(a)\left (\frac{2}{\alpha } -\frac{1}{\alpha  +p} -\frac{\Gamma (\alpha )\Gamma (p +1)}{\Gamma (\alpha  +p +1)}\right ).
\end{equation*}Multiplying each side of the last inequality by $\alpha /2$ proves the right-hand-side of inequality (\ref{Ineq: Fractional NEW}). 
\end{proof}

For all $\alpha > 0$ note that 
\begin{equation*}\gamma (1,\alpha) =\frac{\alpha }{2(\alpha  +1)} +\frac{\Gamma (\alpha  +1)\Gamma (2)}{2\Gamma (\alpha  +2)} =\frac{1}{2}\left (\frac{\alpha }{\alpha  +1} +\frac{1}{\alpha  +1}\right ) =\frac{1}{2} .
\end{equation*}Thus, Theorem \ref{Thm: HH Fractional} reduces to inequality (\ref{Ineq: HH-fractional}) for $p =1$.

\section{Summary}
This paper studies inequalities for functions that are ``very" convex.  These inequalities are simple and easy to apply. We demonstrate the usefulness of these inequalities in a variety of applications from different fields. We foresee additional beneficial applications of our results for studying settings that involve convex functions.

\section{Appendix}

\begin{proof}[Proof of Theorem \ref{Thm: Jensen}]
(i) Let $p \geq 1$ be an integer and $a <b$. Suppose that $f$ is a $(p ,a ,b)$-convex function. We can assume that $f^{(p)}$ is  differentiable.\footnote{Because  $f^{(p)}$ is convex and increasing there exists a sequence of continuously differentiable functions $f_{n}^{(p)}$ such that $ \lim _{n \rightarrow \infty} f_{n}^{(p)} = f^{(p)}$ (see \cite{light2019family}) and the proof follows from an application of the dominated convergence theorem. } 

Define the function $k_{p}(y) =f(a +y^{1/(p+1)})$ on $[0 ,(b -a)^{p+1}]$.  We first show that $k_{p}$ is a convex function on $[0 ,(b -a)^{p+1}]$. 

 $k_{p}$ is convex on $[0 ,(b -a)^{p+1}]$ if and only if $k_{p}^{(2)}(y) \geq 0$ for all $y$ in $(0 ,(b -a)^{p+1})$, i.e., 
 \begin{equation*}f^{(2)}(a +y^{1/(p+1)})y^{\frac{2 - 2p}{p+1}} -pf^{(1)}(a +y^{1/(p+1)})y^{\frac{1 -2p}{p+1}} \geq 0.
\end{equation*}
Defining $x =a +y^{1/(p+1)}$ and rearranging yields 
\begin{equation}f^{(2)}(x)(x -a) -p f^{(1)}(x) \geq 0. \label{Jensen: ineq1}
\end{equation}
Thus, $k_{p}$ is convex on $[0 ,(b -a)^{p+1}]$ if and only if  inequality (\ref{Jensen: ineq1}) holds for all $x \in [a ,b]$.

Because $f^{(p)}$ is convex the derivative of $f^{(p)}$ is increasing. Using the fact that $f^{(p)}(a) =0$ we have
\begin{equation}f^{(p)}(x) =f^{(p)}(a) +\int _{a}^{x}f^{(p+1)}(t)dt =\int _{a}^{x}f^{(p+1)}(t)dt \leq \int _{a}^{x}f^{(p+1)}(x)dt =(x -a)f^{(p+1)}(x). \label{Jensen: ineq2}
\end{equation}
 Thus, if $p = 1$ then inequality (\ref{Jensen: ineq1}) holds and $k_{1}$ is convex on  $[0 ,(b -a)^{2}]$. 

To prove that inequality  (\ref{Jensen: ineq1}) holds for $p \geq 2$, define $z(x) =f^{(2)}(x)(x -a) -pf^{(1)}(x)$ on $[a ,b]$. Note that $z^{(1)}(x) =f^{(3)}(x)(x -a) -f^{(2)}(x)(p -1)$, and more generally \begin{equation*}z^{(k)}(x) =f^{(k +2)}(x)(x -a) -f^{(k +1)}(x)(p -k)
\end{equation*}
for all $k=1,\ldots,p-1$. 

Because $f^{(k)}(a) =0$ for all $k=1,\ldots,p$, we have $z^{(k)}(a) =0$ for all  $k=0,\ldots,p-1$.  

Inequality (\ref{Jensen: ineq2}) yields $z^{(p - 1)}(x) \geq 0$ for all $x \in [a ,b]$. Thus, $z^{(p -2)}$ is an increasing function. Combining this with the fact that $z^{(p -2)}(a) =0$ implies that $z^{(p -2)}(x) \geq 0$ for all $x \in [a ,b]$. Using the same argument as above it follows by induction that $z^{(j)}(x) \geq 0$ for all $x \in [a ,b]$ and all $j=0,\ldots,p-1$. In particular, $z^{(0)}(x) :=z(x) \geq 0$ for all $x \in [a ,b]$.  We conclude that inequality  (\ref{Jensen: ineq1}) holds for all $p \geq 2$, i.e., the function $k_{p}$ is convex on $[0 ,(b -a)^{p+1}]$.   

%Hence, $z^{(1)}(x) \geq 0$ for all $x \in [a ,b]$. Combining this with the fact that $z(a) =0$ implies $z(x) \geq 0$ for all $x \in [a ,b]$. We conclude that inequality  (\ref{Jensen: ineq1}) holds for $p \geq 4$, i.e., the function $k_{p}$ is convex on $[0 ,(b -a)^{p-1}]$.   

 Let $X$ be a random variable on $[a ,b]$. From Jensen's inequality (applied to the random variable $Y=(X-a)^{p+1}$ on $[0 ,(b -a)^{p+1}]$)  we have $$\mathbb{E}k_{p}((X -a)^{p+1}) \geq k_{p}(\mathbb{E}(X -a)^{p-1}).$$ That is, 
\begin{equation*}\mathbb{E}f(X) \geq f\left (a +\left (\mathbb{E}(X -a)^{p+1}\right )^{1/(p+1)}\right ) 
\end{equation*}
which proves part (i).

(ii) From part (i) the function $k_{p}(y) =f(a +y^{1/(p+1)})$ is convex on $[0 ,(b -a)^{p+1}]$. 

Because $k_{p}$ is convex we have $k_{p}(0) \geq k_{p}(y) +k_{p}^{(1)} (y) \left (0-y\right )$ for all $y \in [0 ,(b -a)^{p+1}]$. Thus,
\begin{equation*} f(a) \geq f(a+y^{1/(p+1)}) - f^{(1)}(a+y^{1/(p+1)})\frac{y^{1/(p+1)}}{p+1}. 
\end{equation*}
Using the fact that $f(a)=0$ and defining $x=a+y^{1/(p+1)}$ yield $(p+1)f(x) \leq f^{(1)}(x)(x-a)$.

Note that 
\begin{equation*}
    g^{(1)}(x) = \frac{f^{(1)}(x)(x-a)^{p+1} - (p+1)(x-a)^{p}f(x)}{((x-a)^{p+1})^{2}} \geq 0 \end{equation*}
if and only if $$f^{(1)}(x)(x-a) \geq (p+1)f(x).$$ We conclude that $g$ is increasing. 
\end{proof}

\begin{proof}[Proof of Corollary \ref{Corollary: jensen upper bound}]
Consider the random variable that yields $a$ with probability $t \in (0,1)$ and $b$ with probability $1- t$. From Theorem \ref{Thm: Jensen} for all $f \in \mathfrak{I}(p,a,b)$ we have 
\begin{equation*}
    (1-t)f(a) + tf(b) \geq f\left (a+(t(b-a)^{p+1})^{1/(p+1)} \right ) = f(a+t^{1/(p+1)}(b-a)). 
\end{equation*}  
For $\left ( (x-a)/(b-a) \right )^{p+1}  = t$ where $x \in [a ,b]$ we have 
\begin{equation*}
   \left(1-\frac{(x-a) ^{p+1}}{(b-a)^{p+1}} \right)f(a) + \frac{(x-a) ^{p+1}}{(b-a)^{p+1}} f(b) \geq f(x) 
\end{equation*} 
Let $X$ be a random variable on $[a,b]$. Taking expectations in both sides of the last inequality yields 
\begin{equation*}
   \left(1-\frac{\mathbb{E} (X-a) ^{p+1}}{(b-a)^{p+1}} \right)f(a) + \frac{\mathbb{E} (X-a) ^{p+1}}{(b-a)^{p+1}} f(b) \geq \mathbb{E} f(X)
\end{equation*}  
which proves the Corollary. 
\end{proof}

\bibliographystyle{ecta}
\bibliography{HH}

\end{document}